\newtheorem{theorem}{Theorem}[section]
\newtheorem{proposition}[theorem]{Proposition}
\newtheorem{lemma}[theorem]{Lemma}
\theoremstyle{definition}
\newtheorem{definition}[theorem]{Definition}
\newtheorem{example}[theorem]{Example}
\numberwithin{equation}{section}
\newcommand\ff{\mathsf{F}}
\newcommand\sft{\mathsf{T}}
\newcommand\nn{\mathbb{N}}
\newcommand\rr{\mathbb{R}}
\newcommand\zz{\mathbb{Z}}
\begin{document}

	\mbox{}
	\title{The Chain Group of a Forest}
	\author{Felix Gotti}
	\address{Mathematics Department\\UC Berkeley\\Berkeley, CA 94720}
	\email{felixgotti@berkeley.edu}
	\author{Marly Gotti}
	\address{Mathematics Department\\University of Florida\\Gainesville, FL 32611}
	\email{marlycormar@ufl.edu}
	\date{\today}
	
	\begin{abstract}
		For every labeled forest $\ff$ with set of vertices $[n]$ we can consider the subgroup $G$ of the symmetric group $S_n$ that is generated by all the cycles determined by all maximal paths of $\ff$. We say that $G$ is the chain group of the forest $\ff$. In this paper we study the relation between a forest and its chain group. In particular, we find the chain groups of the members of several families of forests. Finally, we prove that no copy of the dihedral group of cardinality $2n$ inside $S_n$ can be achieved as the chain group of any forest. 
	\end{abstract}
	
	\maketitle
	
	\section{Introduction} \label{sec:intro}
	
	It is typical in Mathematics to use intrinsic information of discrete objects such as graphs, trees, and finite posets, to carry out algebraic and geometric constructions. For instance, such constructions include the fundamental group of a graph \cite[Chapter~11]{aH01}, the incidence algebra of a finite poset \cite[Chapter~3]{rS11}, and the forest polytope of a graph \cite[Chapter~50]{aS03}. In this paper we use the maximal paths of a forest $\ff$ to construct a finite group, which we call the \emph{chain group} of $\ff$. The method we use to produce the chain group of a given forest is motivated in part by the way Stanley defines a chain polytope from a locally finite poset (see \cite{rS86}).
	
	Given a finite poset $P = \{x_1, \dots, x_n\}$, its corresponding \emph{chain polytope} $\mathcal{C}(P)$ is defined to be the set of points $(y_1, \dots, y_n) \in \rr_{\ge 0}^n$ satisfying the condition
	\begin{equation} \label{eq:chain polytope condition}
		y_{i_1} + \dots + y_{i_k} \le 1 \ \text{ whenever } \ x_{i_1} <_P \dots <_P x_{i_k} \ \text{ is a maximal chain of } P.
	\end{equation}
	In other words, the chain polytope $\mathcal{C}(P)$ is the intersection of the half-spaces determined by the maximal chains of $P$ as indicated in \eqref{eq:chain polytope condition}.
	
	Let us see how to reuse the same method Stanley applies to build the chain polytope of a poset, to naturally associate a finite group $\mathcal{G}(\ff)$ to each forest $\ff$. Instead of taking $\rr^n$ as the universe containing the half-spaces utilized in \eqref{eq:chain polytope condition} to produce $\mathcal{C}(P)$, we can rather consider $S_n$ as the universe containing the generators of a group $\mathcal{G}(P)$, which is defined by
	\[
		\mathcal{G}(\ff) = \big\langle (s_{i_1} \ \dots \ s_{i_k}) \in S_n \ \text{ whenever } \ x_{i_1} <_P \dots <_P x_{i_k} \ \text{ is a maximal path in} \ \ff \big\rangle.
	\]
	We call $\mathcal{G}(\ff)$ the \emph{chain group} of the forest $\ff$.
	
	Chain polytopes, as introduced in \cite{rS86} by Stanley, have nice features. For example, if $\mathcal{C}(P)$ is the chain polytope of the poset $P$, then the number of vertices of $\mathcal{C}(P)$ equals the number of antichains of $P$; see \cite[Theorem~2.2]{rS86}. In addition, the volume of $\mathcal{C}(P)$ is determined by the combinatorial structure of $P$; see \cite[Corollary~4.2]{rS86}. We will see in Section~3 that the chain group of a forest has a nice behavior; for example, disjoint unions of forests become direct sums of groups (see Proposition~\ref{prop:chain group of disjoint graphs}). On the other hand, if we relabel a given forest $\ff$, the resulting forest has chain group conjugate to $\mathcal{C}(\ff)$ (see Proposition~\ref{prop:order groups of isomorphic and dual posets}).
	
	
	
	There are a few natural questions we might ask about this assignment. How the chain groups of two distinct labeling of the same forest are associated? If $G$ is the chain group of the forest $\ff$, can we determine whether $G$ satisfies certain properties only by studying $\ff$? It is our intension to answer such questions here.
	
	In addition, we might wonder, for a fixed $n$, which subgroups of $S_n$ will show as a chain group of some forest labeled by $[n]$. Given that, every finite subgroup is a subgroup of $S_n$ for $n$ large enough, this is not a question that we expect to answer in its full generality. However, we might hope to decide whether relatively simple families of subgroups of $S_n$ can be realized as chain groups of some $n$-forest. For example, is the alternating group $A_n$ a chain group of an $n$-forest for every $n \in \nn$? This question, along with other similar ones, will be answered later in the sequel.
	
	This paper is structured as follows. In Section~\ref{sec:Background and Notation} we review the definitions on graph theory we will be using later. Then, in Section~\ref{sec:general observations}, we prove that passing from a forest to its chain group behaves well with respect to relabeling and changes disjoint union for direct sum. In Section~\ref{sec:abelian case} we study the abelian chain groups. In Section~\ref{sec:the chain group of a tree} we compute the chain groups of members of several families of trees. We also provide some results useful to find the chain groups of some forests. Finally, in Section~\ref{sec:missing chain groups}, we show that the dihedral cannot be achieved as the chain group of any forest.
	
	\section{Background and Notation} \label{sec:Background and Notation}
	
	In this section, we fix notation and briefly recall the definitions of the main objects related to those being studied here. We also state the relevant properties of such objects necessary to follow the present paper. For background material in group theory, symmetric groups, and graph theory we refer the reader to Rotman \cite{jR94}, Sagan \cite{bS01}, and Bondy and Murty \cite{BM08}, respectively.
	
	The double-struck symbols $\mathbb{N}$ and $\mathbb{N}_0$ denote the sets of positive integers and non-negative integers, respectively. For $n \in \nn$, we denote the set $\{1,\dots, n\}$ just by $[n]$. Following the standard notation of group theory, we let $S_n$ and $A_n$ denote the symmetric and the alternating group on $n$ letters, respectively. In addition, the dihedral group of order $2n$ is denoted by $D_{2n}$.
	
	To settle down our nomenclature, let us recall some basic definitions concerning graphs. A \emph{graph} is a pair $\mathsf{G} = (V,E)$, where $V$ is a finite set and $E$ is a collection of $2$-element subsets of $V$. The elements of $V$ are called \emph{vertices} of $\mathsf{G}$ while the elements of $E$ are called \emph{edges} of $\mathsf{G}$. It is often convenient to denote the set of vertices and the set of edges of $\mathsf{G}$ by $V(\mathsf{G})$ and $E(\mathsf{G})$, respectively. The \emph{degree} of a vertex $v$, denoted by $\deg(v)$, is the number of edges containing it. We say that a vertex is a \emph{leaf} if it has degree one. An edge $\{v,w\}$ is also denoted by $vw$. Distinct vertices $v$ and $w$ of $V$ are called \emph{adjacent} if $vw \in E$. In the context of this paper, a \emph{walk} $\omega$ in $\mathsf{G}$ is a sequence of vertices, say $v_0, \dots, v_\ell$ such that $v_{i-1}$ is adjacent to $v_i$ for each $i = 1, \dots, \ell$. If $v_\ell = v_0$, then the walk $\omega$ is said to be \emph{closed}. If, in addition, $v_i = v_j$ implies that $i = j$ or $\{i,j\} = \{0,\ell\}$ then $\omega$ is called a \emph{path}; in this case we say that the \emph{length} of $\omega$ is $\ell$. A path of $\mathsf{G}$ is \emph{maximal} if it is not strictly contained in another path. A closed path of length at least three is called a \emph{cycle}. A graph is said to be \emph{connected} if any two distinct vertices can be connected by a path. Every graph $G$ is the finite disjoint union of connected graphs, which are called \emph{connected components} of $G$. On the other hand, a graph is called \emph{acyclic} provided it does not contain any cycle.
	
	\begin{definition}
		An acyclic connected graph is called a \emph{tree}. A finite disjoint union of trees is said to be a \emph{forest}.
	\end{definition}

	\begin{example}
		The next figure illustrates a graph $\mathsf{G}$ having four connected components. The leftmost component is a \emph{chain}, the second component is a cycle, the third component is a star, and the fourth component is a tree. Notice that $\mathsf{G}$ is not a forest.
		\begin{figure}[h]
			\centering
			\includegraphics[width = 10.0cm]{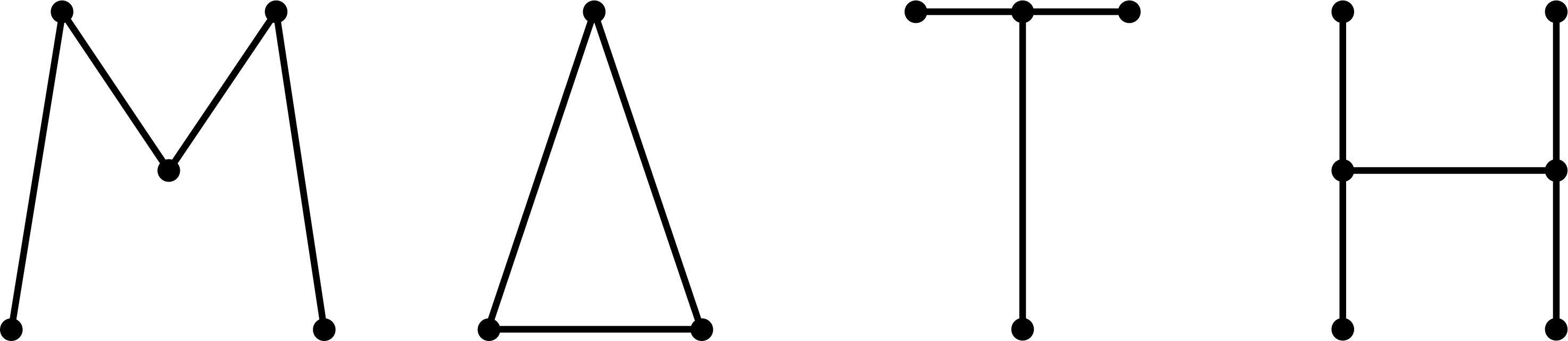}
			\caption{A graph with four connected components.}
			\label{fig:a graph with four connected components}
		\end{figure}
	\end{example}

	A labeled \emph{forest} is a forest $\mathsf{F}$, whose vertices are labeled by the set $\{1,\dots, |V(\mathsf{F})|\}$. All the forests we will be interested in throughout this paper are labeled.
	
	\section{General Observations} \label{sec:general observations}
	
	In this section we formally define the chain group of a forest and explore some general facts connecting them. We also present some examples to illustrate the connection.
	
	\begin{definition}
		For $n \in \nn$ let $\ff$ be a labeled forest with $n$ vertices. The \emph{chain group} of $\ff$, which we denote by $G_{\ff}$, is the subgroup of $S_n$ generated by all cycles $(i_1 \ \dots \ i_m)$ such that $i_1, \dots, i_m$ is a maximal path in $\ff$.
	\end{definition}


	\begin{example}
		Figure~\ref{fig:chain groups of three forests} shows three forests $\ff_1$, $\ff_2$, and $\ff_3$.
		\begin{figure}[h]
			\centering
			\includegraphics[width = 12.0cm]{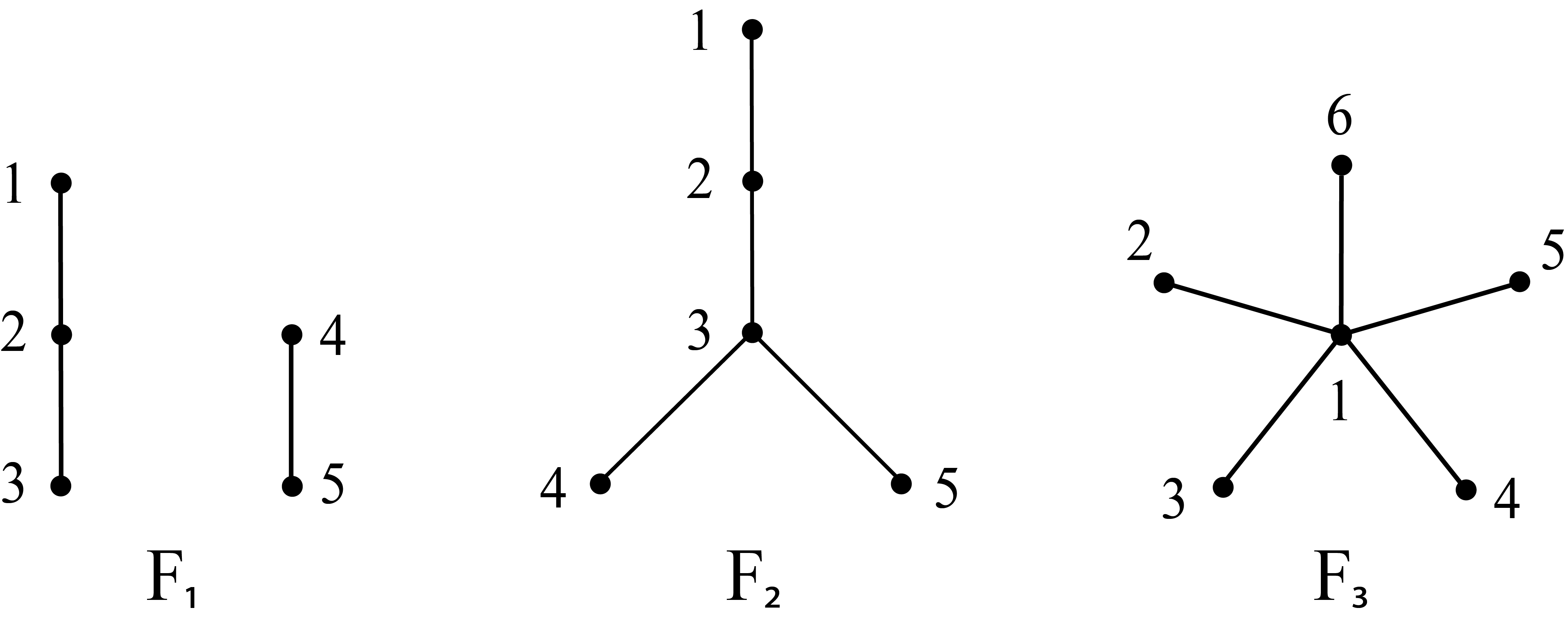}
			\caption{Three labeled forests with their respective chain groups.}
			\label{fig:chain groups of three forests}
		\end{figure}
		The forest $\ff_1$ consists of only two disjoint maximal paths, namely $(1,2,3)$ and $(4,5)$; therefore $G_{\ff_1} = \langle (1 \ 2 \ 3), (4 \ 5) \rangle \cong \zz_3 \times \zz_2$ (cf. Proposition~\ref{prop:chain group of disjoint graphs} below).
		
		On the other hand, $\mathsf{F}_2$ has exactly three maximal paths, which are $(1,2,3,4)$, $(1,2,3,5)$, and $(4,3,5)$. As $(1 \ 2 \ 3 \ 4) \circ (1 \ 2 \ 3 \ 5) = (1 \ 3 \ 5 \ 2 \ 4)$, the chain group $G_{\mathsf{F}_3}$ contains a $5$-cycle. On the other hand, as $(3 \ 4 \ 5) \circ (1 \ 2 \ 3 \ 4) = (1 \ 2 \ 4) (3 \ 5)$ it follows that $G_{\mathsf{F}_2}$ also contains the two cycle $(3 \ 5)$. Hence $S_5 = \langle (1 \ 3 \ 5 \ 2 \ 4), (3 \ 5) \rangle \le G_{\mathsf{F}_2}$, and so $G_{\mathsf{F}_2} = S_5$.
		
		Finally, $\ff_3$ has $\binom{5}{2}$ maximal paths. The chain group of $\ff_3$ is generated by the $3$-cycles $(1 \ a \ b)$ for all $a,b \in \{2,3,4,5,6\}$ with $a \neq b$. These $3$-cycles are enough to generate the whole alternating group (see the proof of Theorem~\ref{thm:chain group of the star graph} for more details). Hence $G_{\mathsf{F}_3} = A_6$.
	\end{example}

	Note that $S_n$ acts on the set of labeled forests having exactly $n$ vertices by relabeling their vertices. We show now that this action conjugates the chain groups.
	
	\begin{proposition} \label{prop:order groups of isomorphic and dual posets}
		If $\ff$ and $\ff'$ are forests with $n$ vertices that are a relabeling version of each other, then their chain groups are conjugate in $S_n$.
	\end{proposition}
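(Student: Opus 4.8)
The plan is to encode the relabeling as a single permutation $\sigma \in S_n$ and then to invoke the classical conjugation formula for cycles. First I would pin down what ``relabeling version'' means: there is a permutation $\sigma \in S_n$ with $\ff'$ obtained from $\ff$ by sending each vertex $v$ to $\sigma(v)$, equivalently $vw \in E(\ff)$ if and only if $\sigma(v)\sigma(w) \in E(\ff')$. Thus $\sigma$ is a graph isomorphism from $\ff$ onto $\ff'$, and $\sigma^{-1}$ is one in the opposite direction.

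Next I would establish the only genuinely combinatorial step, namely that $\sigma$ induces a bijection between the maximal paths of $\ff$ and those of $\ff'$. Since $\sigma$ preserves adjacency, it carries any walk $v_0, \dots, v_\ell$ of $\ff$ to the walk $\sigma(v_0), \dots, \sigma(v_\ell)$ of $\ff'$, and it preserves the property of being a path (distinctness of vertices is maintained by the bijection $\sigma$). Because both $\sigma$ and $\sigma^{-1}$ preserve path containment, a path is maximal in $\ff$ exactly when its image is maximal in $\ff'$. Hence the assignment $i_1, \dots, i_m \mapsto \sigma(i_1), \dots, \sigma(i_m)$ is the desired bijection of maximal paths.

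The algebraic heart of the argument is the standard identity
\[
	\sigma\, (i_1 \ \dots \ i_m)\, \sigma^{-1} = (\sigma(i_1) \ \dots \ \sigma(i_m)),
\]
which holds for every cycle of $S_n$. Feeding the bijection above into this identity shows that conjugation by $\sigma$ sends each generating cycle of $G_\ff$ to a generating cycle of $G_{\ff'}$, and that every generator of $G_{\ff'}$ arises in this way. Since conjugation by $\sigma$ is an automorphism of $S_n$, it maps the subgroup generated by a set $X$ onto the subgroup generated by $\sigma X \sigma^{-1}$; applying this with $X$ equal to the set of generating cycles of $G_\ff$ would yield $\sigma G_\ff \sigma^{-1} = G_{\ff'}$, which is precisely the claimed conjugacy.

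I do not anticipate a serious obstacle, as the statement is essentially transport of structure along the isomorphism $\sigma$. The only point that will require a little care is checking that maximality of paths is preserved in both directions; but this is immediate once one observes that $\sigma$ and $\sigma^{-1}$ are mutually inverse graph isomorphisms, so no path can become non-maximal or newly maximal under relabeling.
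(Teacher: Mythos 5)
Your proposal is correct and follows essentially the same route as the paper's proof: both encode the relabeling as a permutation $\sigma \in S_n$, observe that it carries maximal paths of $\ff$ bijectively to maximal paths of $\ff'$, and apply the cycle-conjugation identity $\sigma (i_1 \ \dots \ i_m) \sigma^{-1} = (\sigma(i_1) \ \dots \ \sigma(i_m))$ to conclude that conjugation by $\sigma$ maps $G_\ff$ onto $G_{\ff'}$. Your phrasing via ``conjugation is an automorphism of $S_n$, hence sends $\langle X \rangle$ to $\langle \sigma X \sigma^{-1} \rangle$'' is a slightly cleaner packaging than the paper's explicit construction of two mutually inverse homomorphisms, but the substance is identical.
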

	
	\begin{proof}
		Let $G$ and $G'$ denote the chain groups of $\ff$ and $\ff'$, respectively. Let $\pi \colon \ff \to \ff'$ be a graph isomorphism. In particular, we can interpret $\pi$ as an element in $S_n$. Consider the map $\varphi \colon G \to G'$ defined by $\varphi(\sigma) = \pi \sigma \pi^{-1}$. For every maximal path $i_1, \dots, i_m$ in $\ff$, we have that $\pi(i_1), \dots, \pi(i_m)$ is a maximal path in $\ff'$ and, therefore,
		\[
			\varphi((i_1 \ \dots \ i_m)) = \pi (i_1 \ \dots \ i_m) \pi^{-1} = (\pi(i_1) \ \dots \ \pi(i_m))
		\]
		is a maximal path in $\ff'$. So the map $\varphi$ is well defined. It follows immediately that $\varphi$ is a group homomorphism. Now we can define $\psi \colon G' \to G$ by $\psi(\sigma) = \pi^{-1} \sigma \pi$, and similarly verify that it is a well-defined homomorphism of groups. Since $\varphi$ and $\psi$ are inverses of each other, $\pi$ is an isomorphism.
	\end{proof}

	Proposition~\ref{prop:order groups of isomorphic and dual posets} gives us the freedom to talk about the chain group of a non-necessarily labeled forest as long as we are not interested in the specific subgroup of the symmetric group we are dealing with but only in its isomorphic class.
	
	Let us verify now that the chain group of a forest is the direct product of the chain groups of the trees of the given forest.
	
	\begin{proposition} \label{prop:chain group of disjoint graphs}
		If $\ff$ is a forest which is the disjoint union of the trees $\sft_1, \dots, \sft_m$, then $G_{\ff} \cong G_{\sft_1} \times \dots \times G_{\sft_m}$.
	\end{proposition}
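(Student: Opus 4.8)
The plan is to exploit the fact that distinct trees of $\ff$ share no vertices, so that the cycles generating $G_\ff$ fall into $m$ families, one supported on each vertex set $V(\sft_j)$, and to show that these families generate subgroups that commute elementwise and meet trivially.

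First I would observe that, because $\ff$ is the disjoint union of $\sft_1, \dots, \sft_m$, no edge joins vertices lying in different trees; hence every maximal path of $\ff$ is contained in a single $\sft_j$, and conversely every maximal path of $\sft_j$ is maximal in $\ff$. Consequently the generating cycles of $G_\ff$ partition according to the tree containing the underlying path. For each $j$ let $H_j \le S_n$ be the subgroup generated by those cycles arising from maximal paths of $\sft_j$. Since each such cycle moves only vertices in $V(\sft_j)$, the subgroup $H_j$ is supported on $V(\sft_j)$ and fixes every vertex outside it, and by construction $G_\ff = \langle H_1, \dots, H_m \rangle$. Regarding $\sft_j$ as a labeled tree on the index set $V(\sft_j)$ and relabeling it by $\{1, \dots, |V(\sft_j)|\}$, Proposition~\ref{prop:order groups of isomorphic and dual posets} gives $H_j \cong G_{\sft_j}$.

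The heart of the argument is to show that $\langle H_1, \dots, H_m \rangle$ is the \emph{internal} direct product of the $H_j$. Two ingredients are needed. First, the supports $V(\sft_1), \dots, V(\sft_m)$ are pairwise disjoint, and permutations with disjoint supports commute; hence elements of $H_i$ and $H_j$ commute whenever $i \neq j$. Second, I must verify the trivial-intersection condition $H_j \cap \langle H_i : i \neq j \rangle = \{\mathrm{id}\}$: any nonidentity element of $H_j$ moves some vertex of $V(\sft_j)$, whereas every element of $\langle H_i : i \neq j \rangle$ is supported on $\bigcup_{i \neq j} V(\sft_i)$ and so fixes all of $V(\sft_j)$ pointwise. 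With these two facts the standard internal-direct-product criterion yields $G_\ff = \langle H_1, \dots, H_m \rangle \cong H_1 \times \dots \times H_m \cong G_{\sft_1} \times \dots \times G_{\sft_m}$.

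I do not expect any serious obstacle here; the only point requiring care is the bookkeeping of supports needed to confirm the trivial-intersection condition, and this is precisely where the disjointness of the trees is indispensable.
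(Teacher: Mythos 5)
Your proof is correct and takes essentially the same approach as the paper's: both split the generating cycles of $G_{\ff}$ according to the tree containing the underlying maximal path and use disjointness of supports to obtain the direct product structure. The only difference is one of care, not of substance --- you treat general $m$ directly and explicitly verify the internal-direct-product conditions (elementwise commutation, trivial intersection, and the relabeling identification $H_j \cong G_{\sft_j}$), whereas the paper reduces to $m=2$ and asserts that $(\sigma,\rho) \mapsto \sigma\rho$ is an isomorphism without spelling these out.
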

	
	\begin{proof}
		Let $n = |\ff|$. It suffices to assume that $m=2$. Let $\sigma_1, \dots, \sigma_r$ be the generating cycles induced by the maximal paths of $\sft_1$, and let $\rho_1, \dots, \rho_s$ be the generating cycles induced by the maximal paths of $\sft_2$. As $\sigma_i$ and $\rho_j$ are disjoint cycles in $S_n$ for each pair $(i,j) \in [r] \times [s]$, we can write every element of $G_{\ff}$ as $\sigma \rho$ for some $\rho \in G_{\sft_1}$ and $\rho \in G_{\sft_2}$. Now it immediately follows that the assignment $(\sigma, \rho) \mapsto \sigma \rho$ is, indeed, an isomorphism from $G_{\sft_1} \times G_{\sft_2}$ to $G_{\ff}$.
	\end{proof}

%
	
%
%

	\section{Abelian Chain Groups associated to $n$-Forests} \label{sec:abelian case}
	
	In this section we characterize the forests whose chain groups are abelian. In addition, we determine those abelian groups that show up as chain groups of some forest.
	
	\begin{example} \label{ex:the cyclic group of order n is always a chain group}
		Let $\sft$ be an $n$-tree with at most two leafs. Then there is only one maximal path, namely $\sigma(1) \prec \dots \prec \sigma(n)$ for some bijection $\sigma \colon [n] \to [n]$. Thus, the chain group associated to $\sft$ is $G = \langle (\sigma(1) \ \dots \ \sigma(n)) \rangle \cong \zz_n$.
	\end{example}
	
	More generally, we have the following result.
	
	\begin{proposition}
		Let $n$ be a natural, and let $\ff$ be an $n$-forest. Then the associated chain group of $\ff$ is abelian if and only if $\ff$ is the disjoint union of paths.
	\end{proposition}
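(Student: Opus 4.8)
The plan is to reduce the statement to the case of a single tree and then handle the two implications separately. Writing $\ff$ as the disjoint union of its connected components $\sft_1, \dots, \sft_m$, each of which is a tree, Proposition~\ref{prop:chain group of disjoint graphs} gives $G_{\ff} \cong G_{\sft_1} \times \dots \times G_{\sft_m}$, and a direct product of groups is abelian exactly when every factor is. Since $\ff$ is a disjoint union of paths precisely when each $\sft_i$ is a path, it suffices to prove that for a \emph{tree} $\sft$ the chain group $G_{\sft}$ is abelian if and only if $\sft$ is a path. Throughout I would use the standard fact that a tree is a path if and only if all of its vertices have degree at most two, equivalently if and only if it has no vertex of degree at least three.

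For the easy implication, if $\sft$ is a path then it has at most two leaves, so Example~\ref{ex:the cyclic group of order n is always a chain group} shows that $G_{\sft}$ is cyclic and hence abelian (the one-vertex tree simply yielding the trivial group). The substance is the converse: I must show that if $\sft$ is a tree that is \emph{not} a path, then $G_{\sft}$ is non-abelian. In this case $\sft$ contains a vertex $v$ of degree at least three. The idea is to produce two generating cycles that fail to commute. First I would pick three distinct neighbors $u_1, u_2, u_3$ of $v$ and, inside the three branches hanging off $v$, extend along each to a leaf, obtaining rays $R_i$ running from $v$ through $u_i$ to a leaf $\ell_i$. Concatenating two such rays at $v$ yields a leaf-to-leaf path, which is automatically a maximal path; let $\sigma$ and $\tau$ be the generating cycles associated with $P_{12} = \ell_1 \cdots v \cdots \ell_2$ and $P_{13} = \ell_1 \cdots v \cdots \ell_3$.

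These two cycles agree along the common ray $R_1$ from $\ell_1$ up to $v$ and then diverge at $v$, with $\sigma$ sending $v \mapsto u_2$ and $\tau$ sending $v \mapsto u_3$. The finishing move is to track the image of the neighbor $u_1$ of $v$ lying on the shared ray: since $\sigma(u_1) = \tau(u_1) = v$, one computes
\[
(\sigma\tau)(u_1) = \sigma(\tau(u_1)) = \sigma(v) = u_2 \neq u_3 = \tau(v) = \tau(\sigma(u_1)) = (\tau\sigma)(u_1),
\]
because $u_2$ and $u_3$ are distinct neighbors of $v$. Hence $\sigma\tau \neq \tau\sigma$, so two generators of $G_{\sft}$ do not commute and $G_{\sft}$ is non-abelian, completing the reduction.

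The main point requiring care — rather than a genuine obstacle — is justifying that this construction is legitimate: one must verify that $P_{12}$ and $P_{13}$ really are maximal paths (both endpoints are leaves, and the acyclicity of $\sft$ guarantees that concatenating two distinct branches at $v$ produces no repeated vertex), and that the test vertex $u_1$ lies on the common ray so that both $\sigma$ and $\tau$ send it to $v$. Once these routine checks are in place, the non-commuting pair is completely explicit and the argument is finished.
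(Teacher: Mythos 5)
Your proof is correct, and it takes a noticeably different route from the paper's for the hard direction. The paper argues by contradiction directly on the forest: it takes two arbitrary distinct maximal paths $(i_1, \dots, i_r)$ and $(j_1, \dots, j_s)$ that are not disjoint, uses acyclicity to arrange $i_r \neq j_s$, locates an ``exit index'' $p$ with $i_p$ on the second path but $i_{p+1}$ off it, and then shows via a conjugation computation that the two cycles fail to commute. That argument is more general in scope (it applies to \emph{any} pair of intersecting maximal paths, not just a specially constructed one), but its index bookkeeping is delicate --- the choice of $p$, the identity $(\sigma \tau \sigma^{-1})(i_p) = i_{p+1}$, and possible wrap-around of the cycles all require care --- and the paper leaves implicit the fact that a forest that is not a disjoint union of paths even \emph{has} two intersecting maximal paths. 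Your proof supplies exactly that missing construction and builds the whole argument around it: you first reduce to a single tree via Proposition~\ref{prop:chain group of disjoint graphs} (a factor-by-factor reduction the paper does not spell out), then pick a vertex $v$ of degree at least three and form two leaf-to-leaf maximal paths sharing a common ray through $v$. Because both cycles send $u_1 \mapsto v$ but then diverge to $u_2$ versus $u_3$, non-commutativity is checked by evaluating both products at the single vertex $u_1$, with no case analysis. What you lose in generality (you exhibit one non-commuting pair rather than showing all intersecting pairs fail to commute) you gain in robustness: every step is an explicit, easily verified computation, and the maximality of your two paths is immediate since their endpoints are leaves.
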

	
	\begin{proof}
		Example~\ref{ex:the cyclic group of order n is always a chain group}, along with Proposition~\ref{prop:chain group of disjoint graphs} in the preview section, immediately implies that if $\ff$ is the disjoint union of $k$ chains of lengths $n_1, \dots, n_k$, then $G_{\ff} \cong \zz_{n_1} \times \dots \times \zz_{n_k}$. In particular, $G_{\ff}$ is abelian. To prove the direct implication, suppose by contradiction that there are two distinct maximal paths $(i_1, \dots, i_r)$ and $(j_1, \dots, j_s)$ that are not disjoint. Set $\sigma = (i_1 \ \dots \ i_r)$ and $\tau = (j_1 \ \dots \ j_s)$. As $\ff$ has no cycles, $\{i_1, i_r\} \neq \{j_1, j_s\}$. We can assume, without loss of generality that $i_r \neq j_s$. Because $i_r \neq j_s$, there exists an index $p > 1$ such that $i_p \in \{j_1, \dots, j_s\}$ and $i_{p+1} \notin \{j_1, \dots, j_s\}$ (let $i_p = j_q$). In this case $(\sigma \circ \tau \circ \sigma^{-1})(i_p) = i_{p+1} \neq j_{q+1} = \tau(j_q)$. Hence $\sigma$ and $\tau$ do not commute, contradicting the fact that $G_{\ff}$ is abelian.
	\end{proof}
	
	For $n \in \nn$, we study which abelian groups are chain groups associated to $n$-forests.
	
	\begin{proposition} \label{prop:elementary abelian groups that are chain groups}
		The elementary abelian group $(\zz/p\zz)^r$, where $p$ is prime and $r$ is a natural, is the chain group associated to an $n$-forest if and only if $rp \le n$.
	\end{proposition}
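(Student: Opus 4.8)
The plan is to reduce the statement to the structure of finite abelian groups by leaning on the two preceding results, namely the characterization of abelian chain groups as disjoint unions of paths and the direct-product formula of Proposition~\ref{prop:chain group of disjoint graphs}. I would prove the two implications separately, treating sufficiency by an explicit construction and necessity by a short group-theoretic argument.

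For sufficiency, suppose $rp \le n$. I would exhibit a concrete forest $\ff$: take the disjoint union of $r$ paths, each on $p$ vertices, together with $n - rp$ isolated vertices. This forest has exactly $rp + (n - rp) = n$ vertices. By Example~\ref{ex:the cyclic group of order n is always a chain group}, each path on $p$ vertices has chain group $\zz_p$, while each isolated vertex contributes only the identity, since its unique maximal path is a single vertex whose associated $1$-cycle is trivial. Applying Proposition~\ref{prop:chain group of disjoint graphs} then yields $G_\ff \cong \zz_p^r$, which is exactly $(\zz/p\zz)^r$.

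For necessity, suppose $(\zz/p\zz)^r$ is the chain group of some $n$-forest $\ff$. Since this group is abelian, the preceding proposition forces $\ff$ to be a disjoint union of paths, say on $n_1, \dots, n_k$ vertices with $\sum_i n_i = n$, and then Example~\ref{ex:the cyclic group of order n is always a chain group} together with Proposition~\ref{prop:chain group of disjoint graphs} gives $G_\ff \cong \zz_{n_1} \times \dots \times \zz_{n_k}$. The crux is a purely group-theoretic observation: a direct product of cyclic groups is isomorphic to $(\zz/p\zz)^r$ only when every nontrivial factor has order exactly $p$. Indeed, $(\zz/p\zz)^r$ has exponent $p$, so every element has order dividing $p$; since $\zz_{n_i}$ contains an element of order $n_i$, we must have $n_i \mid p$, that is, $n_i \in \{1, p\}$. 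Letting $a$ be the number of indices with $n_i = p$, the remaining factors are trivial, so $G_\ff \cong \zz_p^a$, forcing $a = r$ by comparing ranks. Counting vertices, $n = \sum_i n_i = rp + (\text{number of isolated vertices}) \ge rp$, which is the desired inequality.

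I do not anticipate a serious obstacle once the two earlier results are invoked: both directions are short. The one point demanding care is the bookkeeping around isolated vertices, i.e. paths on a single vertex, which must be counted toward the total $n$ yet contribute trivially to the chain group. Handling these correctly is precisely what makes the bound appear as the inequality $rp \le n$ rather than as an equality, so I would be explicit about their role in both the construction and the vertex count.
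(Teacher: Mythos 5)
Your proof is correct, and your sufficiency half ($r$ paths on $p$ vertices plus $n-rp$ isolated vertices, assembled via Proposition~\ref{prop:chain group of disjoint graphs}) is exactly the paper's construction. Where you genuinely diverge is the necessity direction. The paper forgets the forest entirely there: it argues that \emph{any} subgroup of $S_n$ isomorphic to $(\zz/p\zz)^r$ forces $rp \le n$, by extracting from the disjoint-cycle decompositions of elements of the group a maximal family of $p$-cycles, no one a power of another, claiming these are pairwise disjoint and generate an elementary abelian group containing the original one. Your argument instead stays inside the paper's own framework: since the chain group is abelian, the preceding proposition forces $\ff$ to be a disjoint union of paths, so $G_{\ff} \cong \zz_{n_1} \times \dots \times \zz_{n_k}$, and the exponent-$p$ condition pins each $n_i$ to $\{1,p\}$ with exactly $r$ factors of size $p$, giving $n \ge rp$ by counting vertices. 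What the paper's route buys, when carried out correctly, is a stronger purely group-theoretic statement (the minimal degree of a faithful permutation representation of $(\zz/p\zz)^r$ is $rp$), independent of chain groups. What your route buys is economy and robustness: it reuses results already proved and avoids the delicate combinatorics of commuting cycles --- indeed, the paper's claim that abelianness makes the chosen $p$-cycles pairwise disjoint is false as stated, since $(1\,2\,3)(4\,5\,6)(7\,8\,9)$ and $(1\,4\,7)(2\,5\,8)(3\,6\,9)$ commute while $(1\,2\,3)$ and $(1\,4\,7)$ overlap and neither is a power of the other. So your version of the necessity argument is, if anything, the safer of the two.
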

	
	\begin{proof}
		For the direct implication, suppose that $(\zz/p\zz)^r$ is the chain group of an $n$-forest. This implies that $S_n$ contains a copy $G$ of $(\zz/p\zz)^r$. Consider the set $S$ of $p$-cycles inside any disjoint-cycle decomposition of any element of $G$. Take a maximal subset $\{\sigma_1, \dots, \sigma_s\}$ of $S$ satisfying that no element is a power of another one. As $G$ is abelian the $\sigma_i$'s are pairwise disjoint. In addition, $G' = \langle \sigma_1, \dots, \sigma_s \rangle$ is isomorphic to $ \cong (\zz/p\zz)^s$ and contains $G$, which yields that $r \le s$. As the $s$ $p$-cycles are disjoint, one finds that $rp \le sp \le n$.
		
		Suppose, on the other hand, that $rp \le n$. Consider the forest $\ff$ having $r + n - rp$ connected components, $r$ of them being path graphs on $p$ vertices and $n - rp$ of them being $1$-vertex trees. The chain group $G$ of $\ff$ is generated then by $r$ disjoint $p$-cycles. Hence $G$ is a subgroup of $S_n$ isomorphic to the elementary abelian group $(\zz/p\zz)^r$, which completes the proof.
	\end{proof}
	
	Not every abelian subgroup of $S_n$ can be reached as an associated chain group of an $n$-forest. In particular, the abelian subgroups of maximal order are never achieved in this way, as we shall prove in Theorem~\ref{thm:abelian group of maximum order that are chain groups}. The following theorem describes the abelian subgroups of $S_n$ of maximum order.
	
	\begin{theorem}\cite[Theorem 1]{BG89}
		Let $G$ be an abelian subgroup of maximal order of the symmetric
		group $S_n$. Then
		\begin{enumerate}
			\item $G \cong (\zz/3\zz)^k$ if $n = 3k$;
 			\item $G \cong \zz/2\zz \times (\zz/3\zz)^k$ if $n = 3k+2$;
			\item either $G \cong \zz/4\zz \times (\zz/3\zz)^{k-1}$ or $G \cong (\zz/2\zz)^2 \times (\zz/3\zz)^{k-1}$ if $n = 3k+1$.
		\end{enumerate}
	\end{theorem}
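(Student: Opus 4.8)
Since this statement is quoted from \cite{BG89}, the plan is to sketch the self-contained argument one would give to recover it. The whole proof rests on the orbit decomposition of an abelian permutation group together with an elementary optimization. First I would record the key structural fact: a transitive abelian subgroup of a symmetric group acts regularly. Indeed, if an abelian group $H$ acts transitively on a set $\Omega$, then $H_{h\omega} = h H_\omega h^{-1} = H_\omega$ for every $h$, so by transitivity all point stabilizers equal $H_\omega$; their common value is therefore the kernel of the action, and since the action is faithful this kernel is trivial, forcing $|H| = |\Omega|$.

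Next I would decompose the action. Let $G \le S_n$ be abelian and let $\Omega_1, \dots, \Omega_t$ be its orbits on $[n]$, with $|\Omega_i| = n_i$ and $\sum_i n_i = n$. Writing $\bar G_i$ for the image of $G$ in $\mathrm{Sym}(\Omega_i)$, each $\bar G_i$ is a transitive abelian group, hence regular by the previous step, so $|\bar G_i| = n_i$. The restriction map $G \hookrightarrow \prod_{i=1}^t \bar G_i$ is injective, which gives the bound $|G| \le \prod_{i=1}^t n_i$. Conversely, for any partition $n = \sum_i n_i$ one realizes an abelian subgroup of order $\prod_i n_i$ as a direct product of cyclic regular groups $\zz/n_i\zz$ supported on disjoint blocks. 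Hence the maximal order of an abelian subgroup of $S_n$ equals the maximal value of $\prod_i n_i$ over all partitions of $n$.

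Then I would solve the optimization. A part equal to $1$ only wastes vertices, and a part $m \ge 5$ can be replaced by $3$ and $m-3$ to strictly increase the product; so an optimal partition uses only parts in $\{2,3,4\}$, and since $3 \cdot 3 > 2 \cdot 2 \cdot 2$ and $4 = 2 \cdot 2$, it uses as many $3$'s as the residue of $n$ modulo $3$ permits. This yields the three cases: all $3$'s when $n = 3k$; one $2$ together with $k$ threes when $n = 3k+2$; and, when $n = 3k+1$, exactly two optimal shapes, namely $(k-1)$ threes with a single $4$, or $(k-1)$ threes with two $2$'s, both of product $4 \cdot 3^{k-1}$.

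Finally I would read off the isomorphism types. For a maximal $G$ both displayed inequalities are equalities, so $G \cong \prod_i \bar G_i$ with each $\bar G_i$ a regular abelian group of order $n_i$. A block of size $2$ forces $\bar G_i \cong \zz/2\zz$ and a block of size $3$ forces $\bar G_i \cong \zz/3\zz$, while a block of size $4$ gives $\bar G_i \cong \zz/4\zz$ or $\bar G_i \cong (\zz/2\zz)^2$; assembling these according to the optimal shapes produces exactly the groups in (1)--(3). The only genuinely delicate point I expect is the case $n = 3k+1$: there both optimal partition shapes must be accounted for, and a size-$4$ block may carry either abelian group of order $4$, which is precisely why this case admits the two distinct answers recorded in (3).
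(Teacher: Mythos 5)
The paper offers no proof of this statement---it is quoted directly from Burns and Goldsmith \cite{BG89}---so there is no internal argument to compare against, only the cited source. Your reconstruction is correct and follows what is essentially the standard (and the cited paper's) line of argument: faithful transitive abelian permutation groups are regular, so an abelian $G \le S_n$ embeds in the product of its orbit restrictions, giving $|G| \le \prod_i n_i$; maximizing $\prod_i n_i$ over partitions of $n$ forces parts in $\{2,3,4\}$ with as many $3$'s as possible; and in the equality case $G$ is the full product of regular abelian groups on the blocks, which (since the only abelian groups of orders $2$, $3$, $4$ are $\zz/2\zz$, $\zz/3\zz$, and $\zz/4\zz$ or $(\zz/2\zz)^2$) yields exactly the isomorphism types in (1)--(3), including the two possibilities when $n = 3k+1$.
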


	We can use Theorem~\ref{thm:chain group of the star graph} to argue the following proposition.
	
	\begin{theorem} \label{thm:abelian group of maximum order that are chain groups}
		The maximum order abelian subgroups of $S_n$ are the chain group of $n$-forests.
	\end{theorem}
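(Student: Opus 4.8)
The plan is to prove the statement by an explicit construction, producing for every $n$ a disjoint union of path graphs whose chain group is a maximal-order abelian subgroup of $S_n$, and arranging these constructions so that every isomorphism type appearing in the classification of \cite{BG89} above is obtained. The only input I need from the earlier sections is the following: by Example~\ref{ex:the cyclic group of order n is always a chain group} a path on $m$ vertices has chain group $\zz_m$, and by Proposition~\ref{prop:chain group of disjoint graphs} the chain group of a disjoint union of paths on $n_1,\dots,n_t$ vertices, with $n_1+\dots+n_t=n$, is the direct product $\zz_{n_1}\times\dots\times\zz_{n_t}\le S_n$. Thus the task reduces to writing each maximal-order abelian group from the classification as such a product with parts summing to $n$.

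First I would handle $n=3k$. The forest consisting of $k$ disjoint paths on $3$ vertices has exactly $3k=n$ vertices, so its chain group is $\zz_3^k=(\zz/3\zz)^k$, which is case (1) of the classification. (This also follows directly from Proposition~\ref{prop:elementary abelian groups that are chain groups}, as $rp=3k\le n$.) For $n=3k+2$ I would take the disjoint union of one path on $2$ vertices and $k$ paths on $3$ vertices; this forest has $2+3k=n$ vertices and chain group $\zz_2\times\zz_3^k$, which is case (2). For $n=3k+1$ two constructions are needed, one per subcase of (3): a single path on $4$ vertices together with $k-1$ paths on $3$ vertices gives $4+3(k-1)=n$ vertices and chain group $\zz_4\times\zz_3^{k-1}$, while two paths on $2$ vertices together with $k-1$ paths on $3$ vertices also give $4+3(k-1)=n$ vertices and chain group $\zz_2^2\times\zz_3^{k-1}$. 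Since relabeling conjugates the chain group (Proposition~\ref{prop:order groups of isomorphic and dual posets}), each realization stands for its entire conjugacy class, so every isomorphism type listed in \cite{BG89} is realized as a chain group.

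The hard part is not the constructions---which are immediate vertex counts---but the verification that what they produce is genuinely of maximal order, i.e. that the arithmetic above really does match the classification. I would make this self-contained through the orbit decomposition of an abelian subgroup $G\le S_n$: on each of its orbits $G$ acts regularly, so if the orbit sizes are $s_1,\dots,s_t$ then $\sum_i s_i=n$ and the product-of-restrictions map embeds $G$ into the direct product of the regular actions, giving $|G|\le\prod_i s_i$. Maximizing $\prod_i s_i$ subject to $\sum_i s_i=n$ forces the parts to be $3$'s, corrected by the residue of $n$ modulo $3$: an extra $2$ when $n\equiv 2$, and an extra $4$---realized either as one part $4$ or as two parts $2$---when $n\equiv 1$. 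These are precisely the cycle types of the forests above, so their chain groups attain the bound $\prod_i s_i$ and are therefore of maximal order. Comparing the resulting orders $3^k$, $2\cdot 3^k$, and $4\cdot 3^{k-1}$ with \cite{BG89} confirms that nothing larger is abelian, which completes the argument.
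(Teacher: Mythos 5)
Your proposal is correct, and the constructions at its core are exactly the paper's: for $n=3k$, take $k$ paths on three vertices; for $n=3k+2$, those plus a single edge; for $n=3k+1$, either one $4$-vertex path or two edges together with $k-1$ three-vertex paths, assembled via Example~\ref{ex:the cyclic group of order n is always a chain group} and Proposition~\ref{prop:chain group of disjoint graphs}. Where you genuinely depart from the paper is in how maximality of the resulting orders is certified. The paper simply quotes the Burns--Goldsmith classification (its proof cites Theorem~\ref{thm:chain group of the star graph}, but that is evidently a typo; the intended reference is the unlabeled theorem from \cite{BG89} stated just before). You instead prove the order bound yourself: an abelian $G \le S_n$ acts regularly on each of its orbits, since a faithful transitive abelian permutation group is regular, so $|G| \le s_1 \cdots s_t$ where the $s_i$ are the orbit sizes, and maximizing this product over partitions of $n$ gives $3^k$, $2\cdot 3^k$, and $4\cdot 3^{k-1}$, which your forests attain. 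This makes the ``these chain groups have maximum order'' half of the argument self-contained and, in effect, reproves the numerical bound of \cite{BG89} from scratch. Note, however, that the citation cannot be dropped entirely: to conclude that \emph{every} maximum-order abelian subgroup of $S_n$ is (up to isomorphism) realized, one still needs the classification of the isomorphism types attaining the bound, which you correctly retain from \cite{BG89}. So the trade-off is clear: the paper's proof is shorter but rests wholly on the citation, while yours is longer but only uses the citation for the list of types, not for the extremal order itself.
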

	
	\begin{proof}
		Suppose first that $n=3k$. Theorem~\ref{thm:chain group of the star graph} guarantees that any maximum order abelian group $G$ of $S_n$ is a copy of $(\zz/3\zz)^k$. It follows by Proposition~\ref{prop:elementary abelian groups that are chain groups} that $G$ is the chain group of an $n$-forest.
		
		Assume now that $n=3k+2$. Consider the $n$-forest $\ff$ consisting of the following $k+1$ connected components: $k$ $3$-vertex paths and one two-vertex path. The chain group associated to $\ff$ is isomorphic to $\zz/2\zz \times (\zz/3\zz)^k$, which is a maximum order abelian group of $S_n$ by Theorem~\ref{thm:chain group of the star graph}.
		
		Lastly, assume that $n = 3k+1$. Then consider the $n$-forest $\ff_1$ having as connected components $k-1$ three-vertex paths and one $4$-vertex path, and also consider the $n$-forest $\ff_2$ having as connected components $k-1$ three-vertex paths and two $2$-vertex path. Notice the chain groups of $\ff_1$ and $\ff_2$ are isomorphic to $\zz/4\zz \times (\zz/3\zz)^{k-1}$ and $(\zz/2\zz)^2 \times (\zz/3\zz)^{k-1}$, respectively. As before such groups have both maximum orders by Theorem~\ref{thm:chain group of the star graph}, and the result follows.
	\end{proof}

	\section{Chain Groups of some Trees} \label{sec:the chain group of a tree}
	
	In this section, we will only consider trees. The simplest family of trees consists of \emph{chains} (i.e., trees containing exactly one maximal path), and chain groups of chains are cyclic. Another very simple example of trees are the ones having all their vertices except one having degree $1$ (see the second graph in Figure~\ref{fig:chain groups of three forests}). It turns out that the chain group of this family of trees is always $A_n$ as the next theorem indicates.
	
	\begin{theorem} \label{thm:chain group of the star graph}
		For every $n \in \nn_{\ge 3}$, there exists a labeled tree with $n$ vertices whose chain group is $A_n$.
	\end{theorem}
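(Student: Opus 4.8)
The tree to use is the \emph{star} on $n$ vertices, i.e., the tree with a single vertex of degree $n-1$ adjacent to $n-1$ leaves (this is the tree appearing as $\ff_3$ in Figure~\ref{fig:chain groups of three forests}). The plan is to label it conveniently, read off the cycles that its maximal paths produce, and then recognize the resulting subgroup as $A_n$ by matching the generators against a standard generating set of $A_n$ by $3$-cycles.

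First I would label the central vertex by $1$ and the leaves by $2, \dots, n$. Because every vertex other than the center is a leaf, a path can visit the center at most once and cannot be extended past a leaf; hence the maximal paths of the star are precisely the sequences $a, 1, b$ with $a, b \in \{2, \dots, n\}$ and $a \neq b$. Each such path contributes the generator $(a \ 1 \ b)$, so the chain group is
\[
	G = \big\langle (a \ 1 \ b) : a, b \in \{2, \dots, n\}, \ a \neq b \big\rangle,
\]
that is, the subgroup generated by all $3$-cycles that move the symbol $1$.

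Second, since every generator is a $3$-cycle and therefore an even permutation, one immediately obtains $G \le A_n$. For the reverse containment I would exhibit inside $G$ a known generating set of $A_n$. The $3$-cycles $(1 \ 2 \ k)$ for $k = 3, \dots, n$ all occur as generators (take $a = 2$ and $b = k$), and it is classical (see, e.g., Rotman~\cite{jR94}) that these $n-2$ cycles generate the whole alternating group. Concretely, any $3$-cycle on symbols distinct from $1$ factors as $(a \ b \ c) = (1 \ a \ b)(1 \ b \ c)$, so the $3$-cycles moving $1$ generate all $3$-cycles and hence all of $A_n$. Combining the two inclusions yields $G = A_n$.

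I do not anticipate a genuine obstacle here, as the argument is essentially a dictionary translation between the combinatorics of the star and a well-known generating set for $A_n$. The two points requiring care are the exact description of the maximal paths (one must check that no longer path exists, which is clear since all non-central vertices are leaves) and the correct invocation of the $3$-cycle generation lemma; the short factorization identity displayed above removes any doubt about the latter.
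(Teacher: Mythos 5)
Your proposal is correct and follows essentially the same route as the paper: both take the star $K_{1,n-1}$, observe that its maximal paths yield exactly the $3$-cycles through the center, and then use the factorization $(a \ b \ c) = (1 \ a \ b)(1 \ b \ c)$ to see that these generate $A_n$, the containment $G \le A_n$ being clear since all generators are even. The only cosmetic differences are that the paper treats $n=3$ separately and additionally reduces to the standard generators $(1 \ 2 \ i)$, a step your argument does not need.
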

	
	\begin{proof}
		When $n = 3$, the alternating group $A_n$ is is isomorphic to $\zz_3$, and it is enough to take $\sft$ to be the only tree on $3$ vertices. Assume that $n \ge 4$. From the fact that every $3$-cycle $(i \ j \ k)$ in $S_n$ not containing $1$ satisfies $(i \ j \ k) = (1 \ i \ j)(1 \ j \ k)$ and the fact that every $3$-cycle $(1 \ i \ j)$ in $S_n$ not containing $2$ satisfies $(1 \ i \ j) = (1 \ 2 \ j)^2(1 \ 2 \ i)(1 \ 2 \ j)$, we can immediately deduce that $A_n$ is generated by $3$-cycles of the form $(1 \ 2 \ i)$ for all $i \in [n] \setminus \{1,2\}$. Now we just need to take $\mathsf{T}$ to be the star graph $K_{1,n-1}$ to have that $G_{\sft} = A_n$ (see, for an illustration, the central forest in Figure~\ref{fig:chain groups of three forests}).
	\end{proof}

	Now we turn to find a large family of forests each of its members $\sft$ has chain group $S_n$, where $n = |V(\sft)|$. First, let us introduce the following definition.
	
	\begin{definition}
		We say that a tree is an \emph{antenna} if it has exactly one vertex of degree three and exactly one maximal path of length two.
	\end{definition}

	It is not hard to verify that if a tree is an antenna, then it must be like $\ff_2$ in Figure~\ref{fig:chain groups of three forests} with, perhaps, the vertical path more prolonged upward. In particular, an antenna has exactly three maximal paths.
	
	\begin{proposition} \label{prop:chain group of an antenna}
		Let $\sft$ be a labeled antenna with an odd number $n$ of vertices. Then the chain group of $\sft$ is $S_n$. 
	\end{proposition}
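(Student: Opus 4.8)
The plan is to reduce to one convenient labeling and then exhibit inside the chain group both an $n$-cycle and a transposition sitting at the right cyclic distance, from which $S_n$ follows. By Proposition~\ref{prop:order groups of isomorphic and dual posets} the chain group of an antenna is determined up to conjugacy in $S_n$, and since the only subgroup of $S_n$ conjugate to $S_n$ is $S_n$ itself, it suffices to treat a single relabeling. I would first record the structure of an antenna: if the degree-three vertex has arms of lengths $\ell_1 \le \ell_2 \le \ell_3$, the three maximal paths have lengths $\ell_1+\ell_2$, $\ell_1+\ell_3$, $\ell_2+\ell_3$, and having \emph{exactly one} of length two forces $\ell_1=\ell_2=1$ and $\ell_3=n-3\ge 2$. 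Labeling the two leaves $1$ and $2$, the branch vertex $3$, and the long arm $3,4,\dots,n$, the three maximal paths yield the generators $\sigma_1=(1\ 3\ 4\ \cdots\ n)$, $\sigma_2=(2\ 3\ 4\ \cdots\ n)$, and $\tau=(1\ 3\ 2)$, so that $G=\langle \sigma_1,\sigma_2,\tau\rangle$.

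Next I would produce an $n$-cycle. A direct computation gives $\sigma_1\sigma_2=\gamma$, where $\gamma$ is the permutation $j\mapsto j+2 \pmod n$ (on the representatives $1,\dots,n$). Here the hypothesis that $n$ is \emph{odd} enters for the first time: $\gamma$ is a single $n$-cycle precisely because $\gcd(2,n)=1$.

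Then I would extract a transposition. Computing $\sigma_1\tau$ yields the disjoint product $(1\ 4\ 5\ \cdots\ n)(2\ 3)$, whose long cycle has odd length $n-2$; raising this element to the power $n-2$ annihilates the long cycle and, because $n-2$ is odd, leaves exactly the transposition $(2\ 3)\in G$. Finally I would combine the two ingredients: along $\gamma$ the elements $3$ and $2$ are separated by a distance $d$ with $2d\equiv -1 \pmod n$, and since $2$ is a unit modulo the odd number $n$ this forces $\gcd(d,n)=1$. Conjugating $(2\ 3)$ by the powers of $\gamma$ then sweeps out the transpositions $(c\ c+1)$ for all $c\in\zz/n\zz$, and these adjacent transpositions generate $S_n$; hence $G=S_n$.

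I expect the only genuine obstacle to be guessing the two productive words $\sigma_1\sigma_2$ and $\sigma_1\tau$; once they are written down, every verification is a one-line computation. The three appearances of the parity of $n$ — $\gamma$ being a single $n$-cycle, the odd exponent $n-2$ isolating a transposition, and the coprimality of the cyclic distance $d$ — are exactly the places where the hypothesis that $n$ is odd is unavoidable, which also suggests that the statement should genuinely fail for even $n$.
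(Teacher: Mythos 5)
Your proposal is correct and follows essentially the same route as the paper: the same normalized labeling, the same three generators, an $n$-cycle obtained as the product of the two long generators, and a transposition obtained by raising a suitable product to the odd power $n-2$. The only substantive difference is your last step, and it is an improvement: the paper concludes with the blanket claim that a full cycle together with a transposition must generate $S_n$, which is false in general (e.g., $\langle (1\ 2\ 3\ 4), (1\ 3)\rangle$ is dihedral of order $8$). It happens to be harmless in the paper's computation, since the paper's transposition $(1\ 3)$ sits at distance $1$ along its $n$-cycle, but the paper never checks this. You do check it: your transposition $(2\ 3)$ sits at distance $d$ along $\gamma\colon j \mapsto j+2$ with $2d \equiv -1 \pmod n$, so $\gcd(d,n)=1$, and conjugating by powers of $\gamma$ yields all transpositions $(c\ \ c+1)$, which generate $S_n$. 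So your write-up supplies exactly the coprimality verification that the paper's argument tacitly relies on, and your opening remark pinning down the structure of an antenna ($\ell_1=\ell_2=1$, $\ell_3=n-3$) also makes explicit what the paper delegates to a figure.
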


	\begin{proof}
		By Proposition~\ref{prop:order groups of isomorphic and dual posets}, we can relabel $\sft$ if necessary so that its labels look like the one in Figure~\ref{fig:horizontal antenna}. Let $G_{\sft}$ be the chain group of $\sft$.
		\begin{figure}[h]
			\centering
			\includegraphics[width = 6cm]{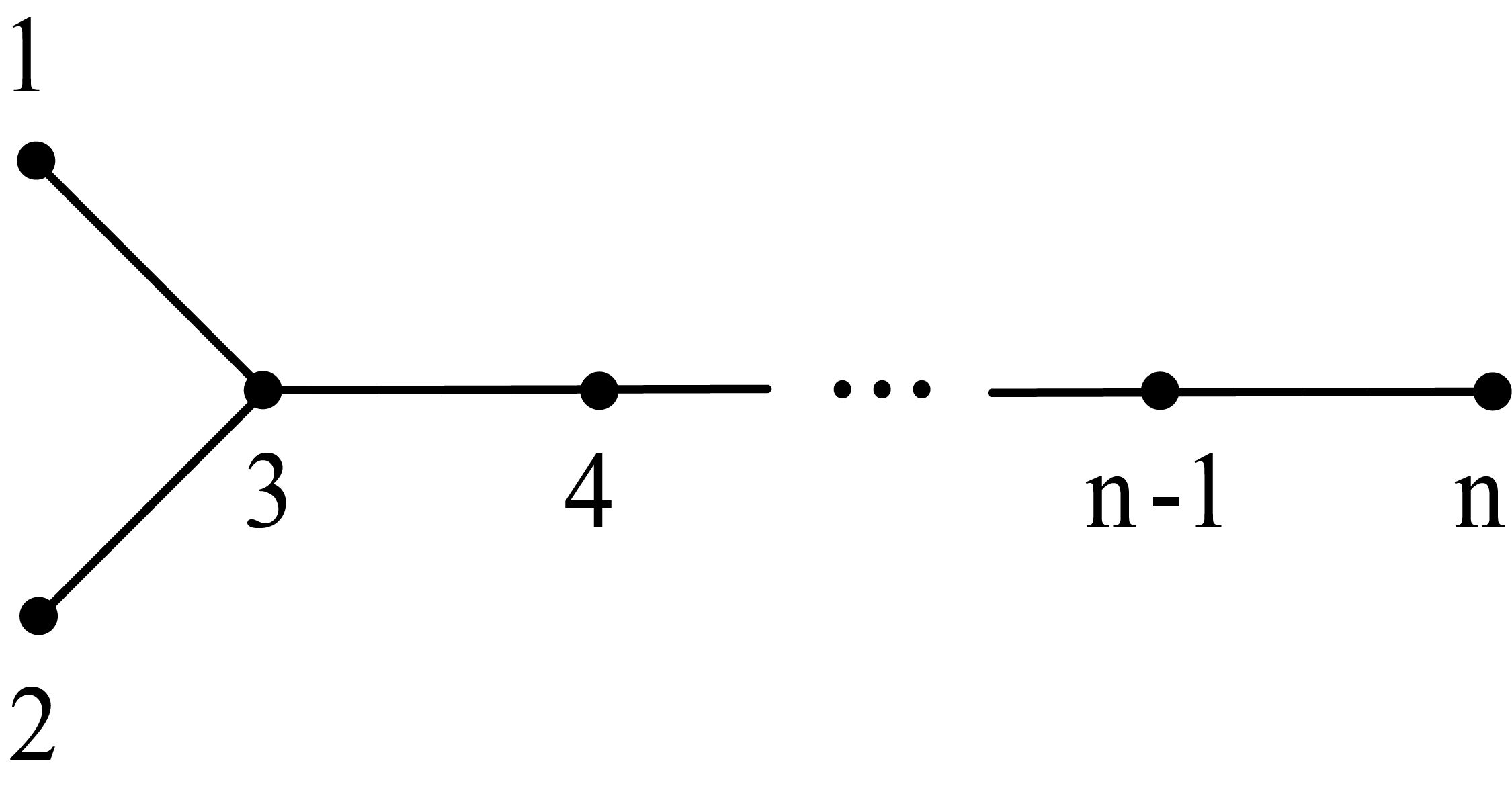}
			\caption{}
			\label{fig:horizontal antenna}
		\end{figure}
		The maximal path of $\sft$ are the $1,3,2$ with corresponding generator $\sigma = (1 \ 3 \ 2)$,  the path $1,3,4, \dots, n$ with corresponding generator $\sigma_1 = (1 \ 3 \ 4 \ \dots \ n)$, and the path $2,3, \dots, n$ with corresponding generator $\sigma_2 = (2 \ 3 \ \dots \ n)$. Notice that $\sigma_1 \circ \sigma_2$ is a cycle of length $n$. In addition, the disjoint cycle decomposition of $\sigma \circ \sigma_2^{-1}$ contains exactly a cycle of length two and a cycle of length $n-2$. Therefore $(\sigma \circ \sigma_2^{-1})^{n-2}$ is a transposition. As $G_{\sft}$ contains a full cycle and a transposition, it must be $S_n$.
	\end{proof}

	Proposition~\ref{prop:chain group of an antenna} says in particular that for every odd $n \ge 5$ there is a tree whose chain forest is $S_n$. In addition, we can use this proposition to find the chain group of more complex forests. Before explaining how to do this, let us introduce the following definition.
	
	\begin{definition}
		Let $G$ be a graph, and let $G'$ be a subgraph of $G$. We say that $G'$ is an \emph{extended subgraph} of $G$ is every leaf of $G'$ is also a leaf of $G$. 
	\end{definition}

	\emph{Extended subtrees} and \emph{extended subforests} are defined in a similar fashion. Notice that a connected component of a graph is always an extended subtree. The next figure depicts a tree and two of its subgraphs (which happen to be forests) only one of them being extended.
	
	\begin{figure}[h]
		\centering
		\includegraphics[width = 2.5cm]{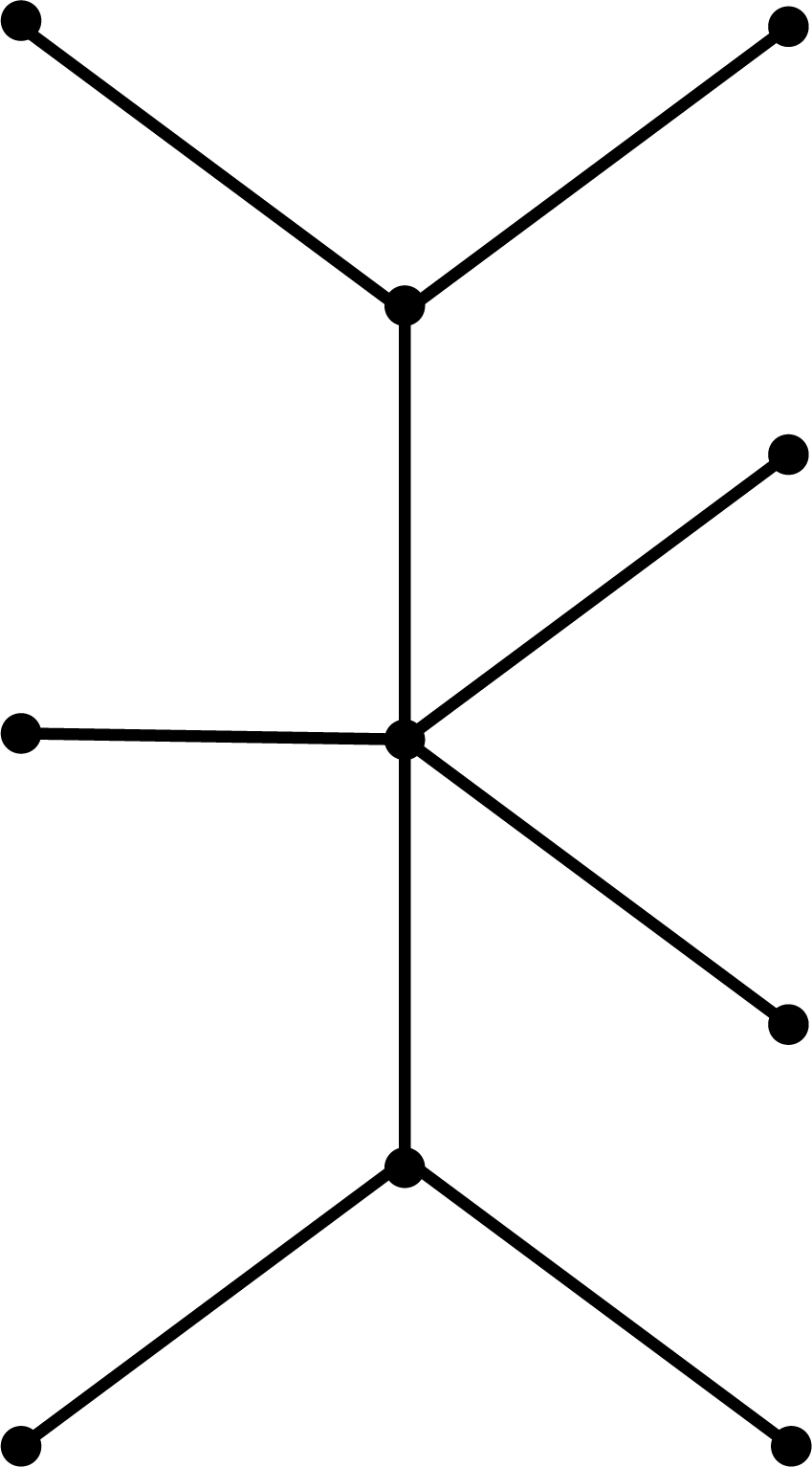} \hspace{2cm}
		\includegraphics[width = 2.5cm]{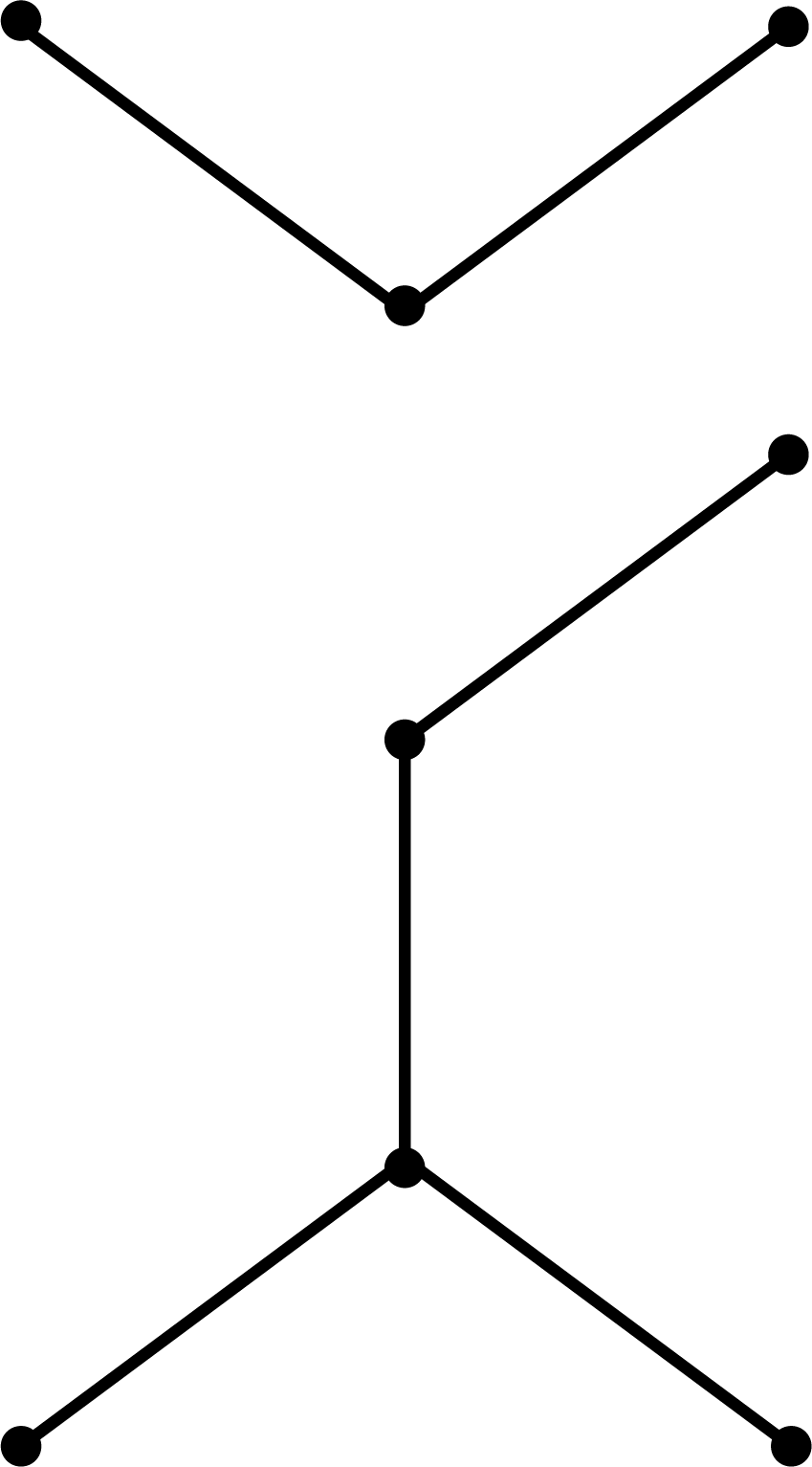} \hspace{2cm}
		\includegraphics[width = 2.5cm]{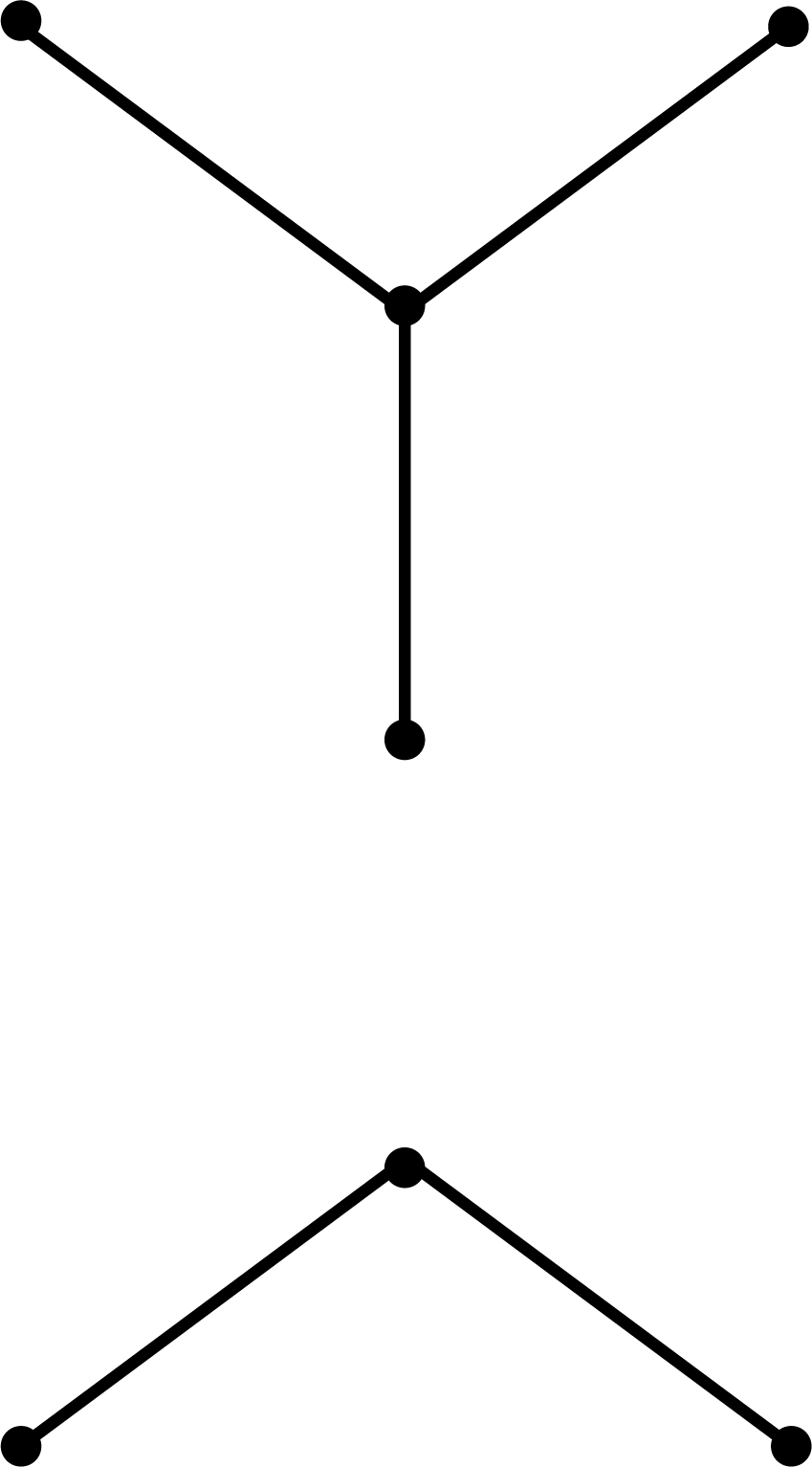}
		\caption{A tree and two of its subforests; only the subforest in the center is extended.}
		\label{fig:extended subforest}
	\end{figure}

	It follows immediately that if $\ff$ is a forest and $\ff'$ is an extended subforest of $\ff$, then the chain group of $\ff'$ is a subgroup of the chain group of $\ff$. Using this observation and Proposition~\ref{prop:chain group of an antenna} is not hard to argue the following result.
	
	\begin{proposition} \label{prop:trees with full chain group}
		Let $\sft$ be a tree with $n$ vertices and a maximal path $\alpha$ of length two. If the distance from any of the two leaves in $\alpha$ to any leaf that is not in $\alpha$ is odd, then the chain group of $\sft$ is $S_n$.
	\end{proposition}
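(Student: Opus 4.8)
The plan is to reduce everything to Proposition~\ref{prop:chain group of an antenna} by covering $\sft$ with antennas that occur as extended subtrees. Write $\alpha$ as the path $a - c - b$, where $a,b$ are its two leaf endpoints; since $\alpha$ has length two, its middle vertex $c$ is the only internal vertex, and (assuming $\sft \neq \alpha$, so that a leaf outside $\alpha$ exists) $c$ has degree at least three. Because $a$ is a leaf whose only neighbor is $c$, every leaf $\ell \notin \{a,b\}$ satisfies $d(a,\ell) = 1 + d(c,\ell)$, so the hypothesis that $d(a,\ell)$ is odd is exactly equivalent to $d(c,\ell)$ being even. This parity translation is the bridge to the antenna proposition.

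First I would, for each leaf $\ell \notin \{a,b\}$, form the subgraph $A_\ell$ consisting of the path $\alpha$ together with the unique path in $\sft$ from $c$ to $\ell$. Writing this path as $c = u_0, u_1, \dots, u_k = \ell$ with $k = d(c,\ell)$, the subgraph $A_\ell$ has exactly one degree-three vertex, namely $c$ (with neighbors $a,b,u_1$), and exactly one maximal path of length two, namely $\alpha$ (the remaining maximal paths $a,c,u_1,\dots,\ell$ and $b,c,u_1,\dots,\ell$ have length $k+1 \ge 3$). Hence $A_\ell$ is an antenna. Its leaves are $a$, $b$, and $\ell$, all of which are leaves of $\sft$, so $A_\ell$ is an extended subtree of $\sft$ and, by the remark preceding this proposition, $G_{A_\ell} \le G_{\sft}$. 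The key count is that $A_\ell$ has $k+3$ vertices, and since $k = d(c,\ell)$ is even this number is odd. Proposition~\ref{prop:chain group of an antenna} then gives $G_{A_\ell} = S_{X_\ell}$, the full symmetric group on the vertex set $X_\ell = \{a,b\} \cup \{u_0,\dots,u_k\}$.

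Next I would glue these symmetric groups together. Every vertex of $\sft$ lies on the path from $c$ to some leaf, and that leaf differs from $a,b$ unless the vertex is one of $a,b,c$; in all cases the vertex lands in some $X_\ell$, so $\bigcup_\ell X_\ell = [n]$. Moreover every $X_\ell$ contains $\{a,b,c\}$, so the family $\{X_\ell\}$ is pairwise intersecting. I would then invoke the elementary fact that $X \cap Y \neq \emptyset$ implies $\langle S_X, S_Y \rangle = S_{X \cup Y}$: a transposition $(x\ y)$ with $x \in X$, $y \in Y$ and a common point $z \in X \cap Y$ factors as $(x\ z)(z\ y)(x\ z)$, so every transposition of $X \cup Y$ is produced. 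Applying this repeatedly across the pairwise-overlapping family shows that the groups $S_{X_\ell}$ generate $S_{\bigcup_\ell X_\ell} = S_n$. Since each $S_{X_\ell} = G_{A_\ell} \le G_{\sft} \le S_n$, I conclude $G_{\sft} = S_n$.

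The main obstacle is the bookkeeping that makes Proposition~\ref{prop:chain group of an antenna} applicable: one must verify carefully that each $A_\ell$ is genuinely an antenna and, crucially, that the oddness hypothesis forces $|A_\ell|$ to be odd — which is precisely the parity equivalence $d(a,\ell)$ odd $\iff d(c,\ell)$ even. Once that is in place the final gluing is routine, since all the $X_\ell$ share the common core $\{a,b,c\}$.
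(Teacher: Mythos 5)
Your proof is correct and follows exactly the route the paper intends: the paper's own ``proof'' is left to the reader, but the sentence preceding the proposition prescribes combining the extended-subforest observation with Proposition~\ref{prop:chain group of an antenna}, which is precisely your covering-by-antennas argument. Your parity bookkeeping (that $d(a,\ell)$ odd is equivalent to $d(c,\ell)$ even, so each antenna $A_\ell$ has an odd number of vertices, with the degenerate case $\sft = \alpha$ rightly excluded) and the gluing of the overlapping symmetric groups $S_{X_\ell}$ via shared transposition points correctly supply the details the paper omits.
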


	\begin{proof}
		Left to the reader.
	\end{proof}

	Proposition~\ref{prop:trees with full chain group}, along with Proposition~\ref{prop:chain group of disjoint graphs}, allows us to easily determine the chain groups of relatively complex forests. For example, the chain group of the forest illustrated in Figure~\ref{fig:final forest} is $S_{12} \times S_{12} \times S_{17}$.
	\begin{figure}[h]
		\centering
		\includegraphics[width = 2.5cm]{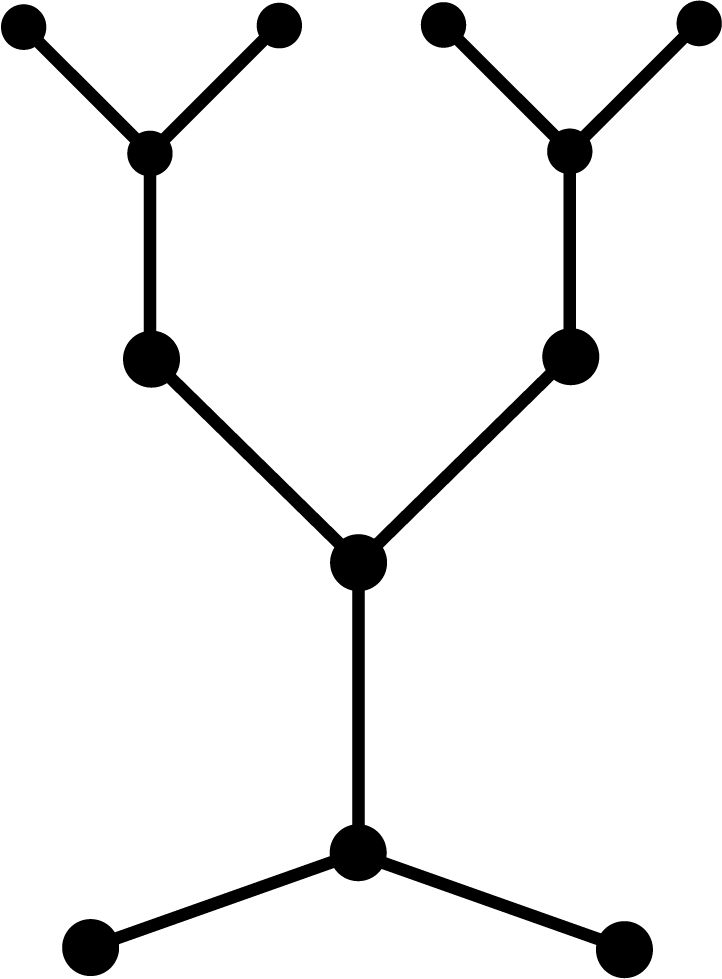} \hspace{1cm}
		\includegraphics[width = 4.5cm]{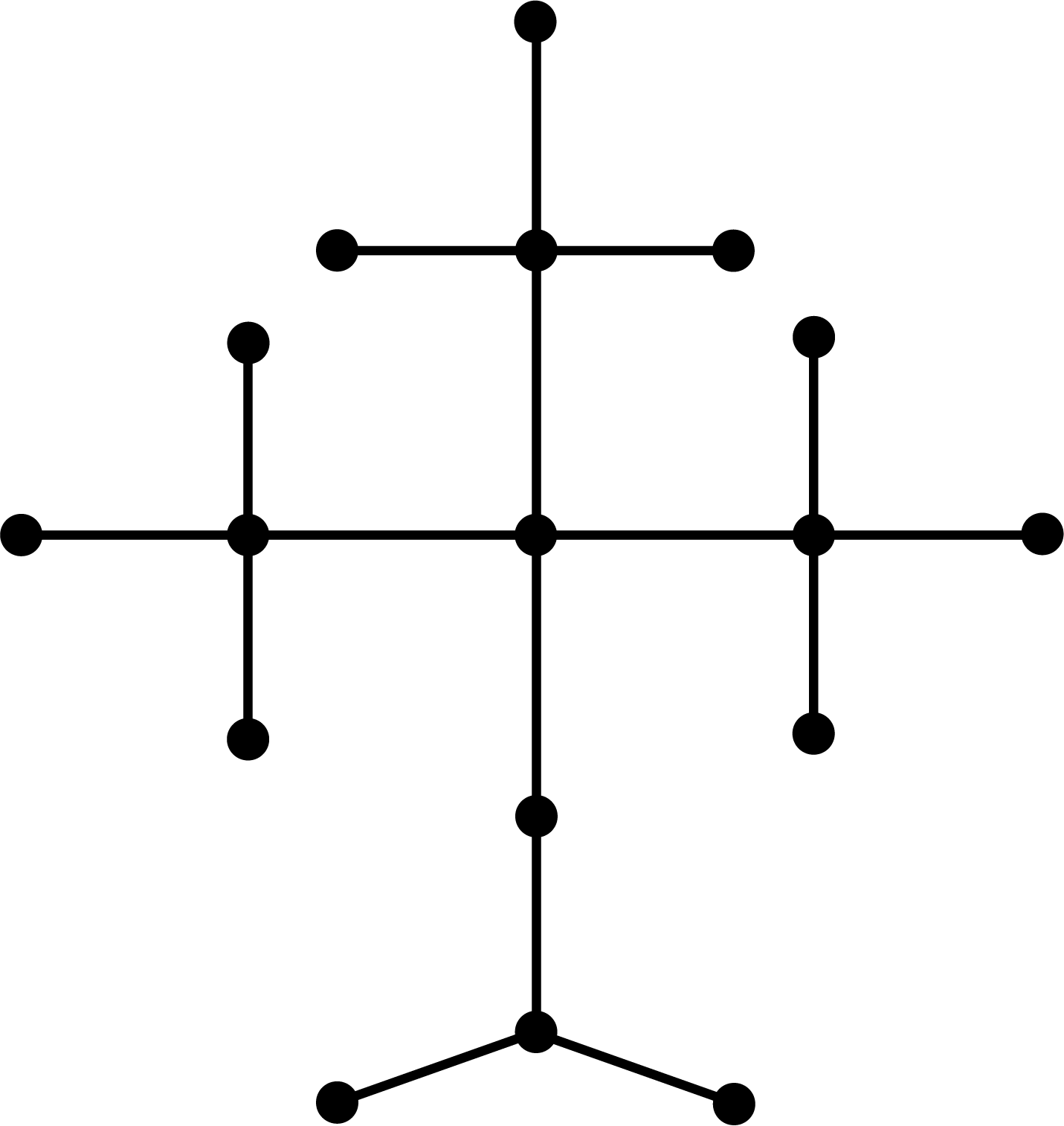} \hspace{1cm}
		\includegraphics[width = 2.5cm]{FinalTree2.png}
		\caption{A forest with 41 vertices.}
		\label{fig:final forest}
	\end{figure}

	We closed this section providing a sufficient condition for the chain groups of some antenna-like trees to have a full cycle.
	
	\begin{proposition}
		Let $\sft$ be a labeled tree with $n$ vertices having exactly one vertex $v$ of degree three and the rest of its vertices of degree at most two. If $\mathsf{d}(v,w)$ is odd for some leave $w$, then $G_{\sft}$ has a cycle of length $n$.
	\end{proposition}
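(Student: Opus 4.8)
\emph{Proof proposal.} The plan is to read off the shape of $\sft$ from the degree hypotheses, to produce a candidate $n$-cycle as a product of two of the generating cycles, and to analyze that product exactly as $\sigma_1 \circ \sigma_2$ is analyzed in the proof of Proposition~\ref{prop:chain group of an antenna}. First I would pin down the structure of $\sft$: since $\sft$ is a tree with a single vertex $v$ of degree three and all other vertices of degree at most two, the degree identity $\sum_{u} \deg u = 2(n-1)$ forces $\sft$ to have exactly three leaves. Hence $\sft$ is a \emph{tripod}, namely three legs of edge-lengths $a, b, c \ge 1$ glued at $v$, with leaves $w_1, w_2, w_3$ at distances $a, b, c$ from $v$ and $n = a + b + c + 1$. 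Its maximal paths are then precisely the three leaf-to-leaf paths through $v$, which yield three generating cycles $\sigma_{12}, \sigma_{13}, \sigma_{23}$ of $G_{\sft}$, where $\sigma_{ij}$ runs up the leg of $w_i$, through $v$, and down the leg of $w_j$. Because containing an $n$-cycle is invariant under relabeling by Proposition~\ref{prop:order groups of isomorphic and dual posets}, I may relabel $\sft$ into a convenient horizontal form analogous to Figure~\ref{fig:horizontal antenna}.

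Next, let $w$ be the leaf with $\mathsf{d}(v,w)$ odd supplied by the hypothesis, and consider the two maximal paths incident to $w$, say $\sigma$ and $\sigma'$. These two paths share exactly the leg from $w$ to $v$, of length $\mathsf{d}(v,w)$, and then split off along the two remaining legs. The key step is to determine the disjoint-cycle type of $\sigma \circ \sigma'$ by tracing the induced permutation: along the shared leg the composite advances two vertices at a time, separating that leg into its even- and odd-indexed strands, while the two private legs together with $v$ are threaded onto the ends of these strands. I would then argue that these strands and legs knit together into a single cycle covering all $n$ vertices, so that $\sigma \circ \sigma'$ is an $n$-cycle lying in $G_{\sft}$, which is the desired conclusion.

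The main obstacle is exactly this closing-up step. Whether the two interleaved strands of the shared leg join into one cycle or fall apart into two disjoint cycles is governed entirely by the parity of the shared length $\mathsf{d}(v,w)$ and the parities of the two private legs, so the heart of the argument is the parity bookkeeping that matches the hypothesis to the cycle type of $\sigma \circ \sigma'$. I would carry this out carefully case by case; should the pair of paths incident to $w$ fail to close up for a given parity, the plan is to rerun the same trace on the pair of maximal paths sharing a leg of the opposite parity before concluding. Once the parity is arranged so that the product is a single cycle, the required $n$-cycle is produced, and by Proposition~\ref{prop:order groups of isomorphic and dual posets} the conclusion does not depend on the chosen labeling.
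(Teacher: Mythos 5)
Your reduction of $\sft$ to a tripod --- three legs glued at $v$, hence exactly three leaves and exactly three maximal paths --- is correct, and your plan (multiply the two generating cycles sharing the leg from $v$ to $w$ and trace the product) is exactly the paper's plan. The genuine gap is that you defer the one computation everything depends on, and it comes out opposite to what you need. Write the shared leg as $w = u_0, u_1, \dots, u_a = v$, so $a = \mathsf{d}(v,w)$, and let the private legs be $x_1, \dots, x_b$ and $y_1, \dots, y_c$, so the two maximal paths through $w$ give $\sigma = (u_0 \ \cdots \ u_a \ x_1 \ \cdots \ x_b)$ and $\tau = (u_0 \ \cdots \ u_a \ y_1 \ \cdots \ y_c)$. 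Tracing $\sigma \circ \tau$ (apply $\tau$ first; the opposite convention just swaps the roles of the $x$- and $y$-legs) gives $u_i \mapsto u_{i+2}$ along the shared leg, and the orbits close up as follows: if $a$ is even the product is the single $n$-cycle
\[
(u_0 \ u_2 \ \cdots \ u_a \ y_1 \ \cdots \ y_c \ u_1 \ u_3 \ \cdots \ u_{a-1} \ x_1 \ \cdots \ x_b),
\]
but if $a$ is odd it splits into the two disjoint cycles $(u_0 \ u_2 \ \cdots \ u_{a-1} \ x_1 \ \cdots \ x_b)$ and $(u_1 \ u_3 \ \cdots \ u_a \ y_1 \ \cdots \ y_c)$. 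So the pair of paths through your hypothesized leaf $w$ with $\mathsf{d}(v,w)$ odd \emph{never} yields an $n$-cycle, and your fallback --- rerun the trace on a pair sharing a leg ``of the opposite parity'' --- presupposes that some leg has even length, which the hypothesis does not provide.

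Moreover, no patch can close this gap, because the statement as printed (with the usual edge-count distance) is false. If all three legs have odd length, then every maximal path has an odd number of vertices, so all three generators are even permutations and $G_{\sft} \le A_n$; but $n = a+b+c+1$ is then even, so an $n$-cycle would be an odd permutation, hence $G_{\sft}$ contains no $n$-cycle. The star $K_{1,3}$ is the smallest counterexample: every leaf is at odd distance $1$ from $v$, yet $G_{K_{1,3}} = A_4$ (Theorem~\ref{thm:chain group of the star graph}) contains no $4$-cycle. You are in good company: the paper's own proof makes the same parity slip, declaring $w_1 = v_1, \dots, v_{2t} = v$ (odd distance) while its displayed formula is computed with $v = v_{2t+1}$, i.e., it is valid exactly when the leg from $w_1$ to $v$ has an odd number of \emph{vertices}, hence even length. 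Both your argument and the paper's formula become correct verbatim once the hypothesis is changed to ``$\mathsf{d}(v,w)$ is \emph{even} for some leaf $w$.''
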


	\begin{proof}
		Note first that $\sft$ only has three leaves, say $w_1, w_2,$ and $w_3$. Suppose, without loss of generality, that $\mathsf{d}(v,w_1)$ is odd. Let $w_1 = v_1, v_2, \dots, v_{2t} = v$ be the path from $w_1$ to $v$. Also, let $v = v_{2t+1}, v_{2t+2}, \dots, v_r = v_2$ and $v = v_{2t+1}, v'_1, \dots, v'_s = w_3$. Then notice that
		\[
			(v_1 \ \dots \ v_r) \circ (v_1 \ \dots \ v_{2t+1} \ v'_1 \ \dots \ v'_s) = (v_1 \ v_3 \ \dots \ v_{2t+1} \ v'_1 \ \dots \ v'_s \ v_2 \ v_4 \ \dots \ v_{2t} \ v_{2t+2} \ \dots v_r)
		\]
		is a cycle of length $n$.
	\end{proof}
	
	\section{The Dihedral is Missing} \label{sec:missing chain groups}
	
	The symmetric group $S_n$ contains many copies of the dihedral group $D_{2n}$. However, none of these copies is the chain group of any labeled forest with $n$ vertices.
	
	\begin{lemma} \label{lem:two vertices greater than two}
		Let $\sft$ be a tree with at least two vertices whose degree is strictly greater than $2$. Then $\sft$ contains a maximal chain $C$ such that $|\mathsf{T} \! \setminus \! C| \ge 3$.
	\end{lemma}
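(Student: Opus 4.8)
The plan is to construct an explicit maximal path that deliberately steers around one of the two high-degree vertices, so that an entire branch of the tree is left uncovered and can be counted. First I would use the hypothesis to fix two distinct vertices $u$ and $v$ of $\sft$ with $\deg(u), \deg(v) \ge 3$, and consider the unique path joining them. The key observation is that this path leaves $u$ through exactly one of $u$'s incident edges, so among the $\deg(u) \ge 3$ edges at $u$ there are at least two that start branches (components of $\sft \setminus \{u\}$) not containing $v$.

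Next I would select, in two distinct such branches, leaves $\ell_1$ and $\ell_2$ of $\sft$; each branch is a finite subtree meeting the rest of $\sft$ only through $u$, so it contains a vertex of degree one in $\sft$. Let $C$ be the unique path from $\ell_1$ to $\ell_2$. Since its endpoints are leaves, $C$ is maximal, and by construction it passes through $u$ while using neither the edge from $u$ toward $v$ nor any vertex of the branch $B$ defined as the component of $\sft \setminus \{u\}$ that contains $v$. Thus $B$ is disjoint from $C$, and it suffices to show $|B| \ge 3$.

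To bound $|B|$ I would count the neighbors of $v$ surviving inside $B$. Every neighbor of $v$ lies in $B$ except possibly $u$ (in the case where $u$ and $v$ are adjacent), so $v$ retains at least $\deg(v) - 1 \ge 2$ neighbors in $B$; together with $v$ itself this forces $|B| \ge 3$. Combining, $|\sft \setminus C| \ge |B| \ge 3$, which is exactly the claim.

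The hard part will not be the final count but getting the branch combinatorics airtight: one must verify that the two chosen branches at $u$ are genuinely distinct components of $\sft \setminus \{u\}$, that each really contains a leaf of $\sft$ (otherwise $C$ could be extended and fail to be maximal), and that $B$ is a third, disjoint component carrying $v$. Once $\sft$ is decomposed at $u$ into its components, all three facts follow from the uniqueness of paths in a tree, and the degree hypothesis on $v$ is precisely what supplies the needed third vertex. A degenerate case worth checking explicitly is the double star, where $u$ and $v$ are adjacent and every branch is a single leaf; there the bound $|B| \ge 3$ is attained with equality, confirming that the estimate is sharp.
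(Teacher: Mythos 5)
Your proposal is correct and follows essentially the same argument as the paper: both construct a maximal path through two branches at one high-degree vertex chosen to avoid the path toward the other high-degree vertex, and then use the second vertex's degree to exhibit three uncovered vertices. The only (cosmetic) difference is in the final count — the paper takes $w$ together with two leaves hanging off branches at $w$, while you take $v$ together with two of its neighbors inside the component of $\sft \setminus \{u\}$ containing $v$, which is arguably a slightly cleaner bookkeeping of the same idea.
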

	
	\begin{proof}
		Let $v$ and $w$ be two distinct vertices of $\sft$ with degrees at strictly greater than $2$. Let $\rho$ be the unique path in $\sft$ from $v$ to $w$. As $\deg(v) \ge 3$ there exist two maximal paths $\nu_1$ and $\nu_2$ among those starting at $v$ such that $\nu_1 \cap \nu_2 = \nu_1 \cap \rho = \nu_2 \cap \rho = \{v\}$. Similarly, there are two paths $\omega_1$ and $\omega_2$ maximal among those starting at $w$ satisfying that $\omega_1 \cap \omega_2 = \omega_1 \cap \rho = \omega_2 \cap \rho = \{v\}$. Let $v_1,v_2,w_1,w_2$ be the leaves contained in $\nu_1, \nu_2, \omega_1, \omega_2$, respectively. Now take $C$ to be the unique maximal chain from $v_1$ to $v_2$. Because $C$ does not contain any vertex in $\{w,w_1,w_2\}$, the lemma follows.
	\end{proof}
	
	\begin{theorem}
		For every $n \in \nn$, the dihedral group $D_{2n}$ is not a chain group of any labeled forest with $n$ vertices.
	\end{theorem}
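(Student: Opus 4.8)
The plan is to show that $D_{2n}$ cannot be realized as a chain group by analyzing the structure a forest would need to produce such a group. First I would handle the abelian realizability obstruction: by the proposition characterizing abelian chain groups, a forest has abelian chain group exactly when it is a disjoint union of paths, and such a chain group is a direct product of cyclic groups $\zz_{n_1} \times \dots \times \zz_{n_k}$; since $D_{2n}$ is non-abelian for $n \ge 3$ (and for small $n$ one checks $D_2 \cong \zz_2$ and $D_4 \cong \zz_2 \times \zz_2$ are handled directly as abelian cases that cannot equal $D_{2n}$ in a way matching the path decomposition), I may restrict attention to forests that are \emph{not} disjoint unions of paths, i.e.\ forests containing a tree with a vertex of degree at least three.

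The key structural idea is that a vertex of degree $\ge 3$ forces the chain group to contain several non-disjoint cycles whose commutators generate ``too much'' of $S_n$ to fit inside a group of order only $2n$. I would first reduce to a single tree: if the forest is disconnected with components $\sft_1, \dots, \sft_m$, then $G_\ff \cong G_{\sft_1} \times \dots \times G_{\sft_m}$ by Proposition~\ref{prop:chain group of disjoint graphs}, and $D_{2n}$ must decompose compatibly; since a dihedral group of order $2n$ acting on $n$ points has no proper direct-product decomposition matching the constraint that the factors act on disjoint vertex sets summing to $n$, the realizing forest (if any) must essentially be a single tree, or a tree plus isolated components contributing trivial factors. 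Then within a single tree having a vertex $v$ of degree $\ge 3$, I would use the three maximal paths through $v$ to produce generators $\sigma, \sigma_1, \sigma_2$ as in the antenna computation of Proposition~\ref{prop:chain group of an antenna}, and show their products already yield both a long cycle and a transposition (or a $3$-cycle), forcing the chain group to be at least $A_n$ or $S_n$ in size, which vastly exceeds $2n$.

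The main obstacle, and where Lemma~\ref{lem:two vertices greater than two} enters, is controlling the case of a tree with a \emph{single} high-degree vertex versus \emph{multiple} high-degree vertices. When two distinct vertices have degree $\ge 3$, Lemma~\ref{lem:two vertices greater than two} supplies a maximal chain $C$ with $|\sft \setminus C| \ge 3$, meaning the generating cycle for $C$ moves all but at least three fixed points while other generators permute those remaining points nontrivially; combining such a cycle with the generators involving the omitted vertices yields elements whose supports overlap in a way that generates a subgroup acting transitively with enough independent transvections to exceed order $2n$. The delicate part is the genuinely single-branch-point case (an antenna or its generalizations): here I must rely on the parity/length analysis and show that unless the branch lengths conspire to give exactly the cyclic case, the chain group contains a full cycle together with a short cycle of coprime-ish support, again producing a group far larger than $2n$. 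I expect the hardest step to be ruling out sporadic small-$n$ coincidences, where $|D_{2n}| = 2n$ might accidentally match the order of a chain group arising from a short antenna; I would dispatch these by a direct order comparison, noting that any non-path tree on $n \ge 4$ vertices already yields a chain group whose order is divisible by $(n-1)!/2$ or contains an element of order exceeding any element order in $D_{2n}$, completing the contradiction.
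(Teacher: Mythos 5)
Your proposal takes a genuinely different route from the paper --- ``show the chain group of any non-path forest is too large to be $D_{2n}$'' rather than the paper's ``show the generators would violate constraints that elements of $D_{2n}$ must satisfy'' --- but as written it has two substantive gaps, and in both places the assertion you lean on is either unproven or false. The central gap is in the connected case: you claim that a tree with a vertex of degree at least $3$ has chain group containing a long cycle and a transposition (or a $3$-cycle), hence of order at least $|A_n|$, and later that any non-path tree on $n \ge 4$ vertices has chain group of order divisible by $(n-1)!/2$. Nothing in the paper, and nothing in your sketch, proves this; it is substantially \emph{stronger} than the theorem being proved. The only cases actually established are stars (Theorem~\ref{thm:chain group of the star graph}) and antennas with an \emph{odd} number of vertices (Proposition~\ref{prop:chain group of an antenna}), and the mechanism you invoke breaks already for an antenna with $n$ even: there the transposition trick $(\sigma \circ \sigma_2^{-1})^{n-2}$ yields the identity, since $n-2$ is even and the power kills the $2$-cycle along with the $(n-2)$-cycle. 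For a general tree with a single degree-$3$ vertex (three legs of arbitrary lengths) you give no argument at all. The paper never needs such strength: in that case it takes a maximal chain $C$ of maximum cardinality through the branch vertex, notes that the corresponding generator is a cycle whose length lies strictly between $n/2$ and $n$, and observes that $D_{2n}$ contains no element of such an order, because every element of $D_{2n}$ has order $2$ or order dividing $n$. That is a purely local element-order obstruction, requiring no lower bound on $|G_{\ff}|$.

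The second gap is the disconnected case. Your reduction rests on the claim that $D_{2n}$ ``has no proper direct-product decomposition,'' which is false as an abstract statement: $D_{12} \cong \zz_2 \times S_3$, and more generally $D_{2n} \cong \zz_2 \times D_n$ whenever $n \equiv 2 \pmod{4}$. To rescue the reduction you must exploit that the factors in Proposition~\ref{prop:chain group of disjoint graphs} act on \emph{disjoint supports}, and that is exactly what the paper does --- not by decomposition, but by counting fixed points: if some component had at least $3$ vertices, then the generator coming from a maximal path of any \emph{other} component would fix those $\ge 3$ points, while every nontrivial element of $D_{2n} \le S_n$ fixes at most two points of $[n]$; isolated vertices are excluded because the action of $D_{2n}$ on $[n]$ fixes no point. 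Hence all components have exactly two vertices, the group is abelian, contradiction. Relatedly, your use of Lemma~\ref{lem:two vertices greater than two} misstates its role: in the paper it produces a fixed-point contradiction (the generator of the chain $C$ fixes the $\ge 3$ vertices of $\sft \setminus C$), not a supply of elements that ``generate too much.'' So while your guiding intuition (non-path forests have large chain groups) is plausible and may well be true, the proposal as written replaces the paper's two finishing arguments --- fixed-point counting and element orders in $D_{2n}$ --- with assertions that are not proved, and in one instance not correct.
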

	
	\begin{proof}
		The dihedral $D_2 \cong \zz_2$ cannot be the chain group of the trivial forest because the latter is trivial. In addition, the possible chain groups of a $2$-forest are isomorphic to either the trivial group or $\zz_2$ and $D_4 \cong V_4$; therefore the theorem is also true in the case of $n=2$. Let $n \ge 3$ and assume, by way of contradiction, that $\ff$ is an $n$-forest whose chain group is the dihedral $D_{2n}$.
		
		First, let us consider the case in which $\ff$ is disconnected. Since the action of $D_{2n}$ on $[n]$ does not fix any point, $\ff$ cannot have trivial connected components (i.e., isolated vertices). If $\ff$ had a connected component $C$ with at least three vertices, then any element of $D_{2n}$ associated to a maximal path of a component $C' \neq C$ would fix at least three elements of $[n]$, namely the vertices of $C$, which is impossible because every nontrivial element of $D_{2n}$ fixes at most two elements of $[n]$. Therefore every connected component of $\ff$ contains exactly two vertices. But the fact that $\ff$ is the disjoint union of paths, contradicts that $D_{2n}$ is not abelian. Hence $\ff$ cannot be disconnected.
		
		Now let $\ff$ be a tree with $n$ vertices whose associated chain group is $D_{2n}$. Since $D_{2n}$ is not abelian, $\ff$ is not a path graph. If $\ff$ contains two vertices of degree strictly greater than $2$, then Lemma~\ref{lem:two vertices greater than two} guarantees the existence of a maximal chain $C$ such that $\ff \! \setminus \! C$ contains at least three vertices. Thus, the generator of $D_{2n}$ associated to the chain $C$ would fix at least three elements of $[n]$, which cannot be possible. Hence $\ff$ must contain at most one vertex $v$ such that $\deg(v) > 2$.
		
		Suppose first that $\deg(v) \ge 4$. If $\deg(v) \ge 5$, then it is not hard to see that for every maximal chain $C$ of $\ff$ containing $v$ one has $|\ff \setminus C| \ge 3$, which would imply that the generator of $D_{2n}$ associated to $C$ fixes at least $3$ elements of $[n]$. Thus, assume $\deg(v) = 4$. If $|\ff| > 5$, then it follows as before that there are at least $3$ vertices in the complement of any maximal chain of $\ff$ having minimum size among those containing $v$. On the other hand, $|\ff| = 5$ implies that $\ff$ is isomorphic to $K_{1,4}$. By Theorem~\ref{thm:chain group of the star graph}, the chain group of $K_{1,4}$ is $A_5$, which is not isomorphic to $D_{10}$ (for instance, $|A_5| > |D_{10}|$), a contradiction.
		
		Finally, suppose that $\deg(v) = 3$. To argue this case, let $C$ be a maximal chain of $\ff$ with maximum cardinality among those containing $v$, and let $\rho$ be the generator of the copy of $D_{2n}$ in $S_n$ induced by $C$. The element $\rho$ is not an $n$-cycle as $|C| < n$. On the other hand, the maximality of $|C|$ implies that $\rho$ has order strictly greater than $n/2$. As the only elements in $D_{2n}$ of order $n$ are the $n$-cycles, we obtain a contradiction. The theorem now follows.
	\end{proof}

\end{document}